\documentclass[review,3p,times,sort&compress]{elsarticle}
\usepackage{amssymb}
\usepackage{tikz}
\usepackage{graphicx,fancyhdr}
\usepackage{graphicx}
\usepackage{subcaption}
\usepackage{multirow}
\usepackage{dsfont}
\usepackage{bbm}
\usepackage{booktabs}
\usepackage{amsthm}
\usepackage{bbm,bm}
\DeclareMathAlphabet{\mathpzc}{OT1}{pzc}{m}{it}
\usepackage{bbm}
\usepackage{dsfont}
\usepackage{yfonts}
\usepackage{amsmath}
\usepackage{amssymb}
\usepackage{amsfonts}
\usepackage{mathrsfs,float}
\usepackage{color}
\usepackage[colorlinks, citecolor=blue]{hyperref}

\makeatletter

\journal{}

\begin{document}
	
	\begin{frontmatter}
		
		
		\newtheorem{theorem}{Theorem}[section]
		\newtheorem{remark}[theorem]{Remark}
		\newtheorem{ex}{Example}
		\newtheorem{case}{Case}
		\newtheorem{pro}[theorem]{Proposition}
		\newtheorem{defi}[theorem]{Definition}
		\newtheorem{ass}{Assumption}
		\newtheorem{lemma}[theorem]{Lemma}
	\newtheorem{corollary}[theorem]{Corollary}
\newtheorem{assumption}[theorem]{Assumption}
		\newproof{pf}{Proof}
		\newproof{pot}{Proof of Theorem \ref{thm2}}
	\title{A Localized Core--Tail Fourier--Laguerre Frame Method with Adaptive Frequency Detection for Unbounded-Domain Problems}
			\author[1]{Chenyang Wang}
	\author[1]{Zhenyu Zhao}
\author[1]{Tinggang Zhao}
		\address[1]{School of Mathematics and Statistics, Shandong University of Technology, Zibo, 255049, China}

\begin{abstract}
We propose a localized core--tail Fourier--Laguerre frame method for
approximation and model problems on the real line. The domain is decomposed
into a finite core and two semi-infinite tails. Local Fourier extension is used
in the core to resolve nonperiodic, oscillatory, and locally nonsmooth
structures, while modulated Laguerre frames are used in the tails so that the
Laguerre functions approximate only slowly varying decaying envelopes. The
local Fourier extension component also provides two data-driven mechanisms:
coefficient-energy indicators for internal edge detection and local frequency
indicators for selecting tail modulation centers. We derive error estimates
that separate the core approximation error, the modulated Laguerre envelope
error, the effect of frequency mismatch, and the finite-tail truncation error.
The analysis shows that the tail complexity is governed mainly by the residual
phase after modulation rather than by the original carrier frequency. Numerical
experiments demonstrate high accuracy for oscillatory, multi-frequency, and
derivative-discontinuous functions, and a decaying model problem illustrates
that the representation can be combined with differential operators through
exact interface constraints.
\end{abstract}
\begin{keyword}
Fourier extension \sep
local Fourier extension \sep
Laguerre functions \sep
unbounded domains \sep
frame approximation \sep
frequency detection \sep
edge detection \sep
piecewise smooth functions
\end{keyword}
		
	\end{frontmatter}

\section{Introduction}

Many problems in scientific computing are naturally posed on unbounded
domains. Typical examples arise in quantum mechanics, kinetic theory,
diffusion processes, fractional and nonlocal models, wave propagation, and
inverse problems. A common numerical treatment is to truncate the unbounded
domain and impose artificial, absorbing, or transparent boundary conditions on
the artificial boundary. This strategy makes it possible to use standard
bounded-domain discretizations, but the accuracy may depend sensitively on the
truncation size and the boundary treatment, especially for slowly decaying,
oscillatory, or nonlocal solutions \cite{Tsynkov1998,Antoine2008}.

Spectral methods based on basis functions defined on unbounded domains provide
an alternative direct approach. Hermite functions are natural on the whole real
line, while Laguerre functions are commonly used on the half-line. These
methods avoid artificial boundaries and can achieve high accuracy for smooth
functions with compatible decay. Classical and recent developments include
Laguerre and Hermite spectral methods for unbounded domains
\cite{Shen2000,ShenWang2009,ShenTangWang2011}, Hermite methods for parabolic
and fractional PDEs \cite{MaSunTang2005,MaoShen2017}, and generalized Hermite
approaches for fractional Laplacian and Schr\"odinger operators
\cite{ShengMaLiWangJia2021}. Generalized Hermite spectral methods have also
been applied to distributed-order time-fractional reaction--diffusion equations
on multi-dimensional unbounded domains \cite{GuoChenMeiSong2021}. Classical Hermite and Laguerre spectral methods usually rely on specially designed quadrature nodes and global basis systems, which may limit their applicability when only uniformly sampled or experimentally collected data are available.

A central issue in global Hermite and Laguerre approximations is the choice of
scaling, translation, and expansion order. Recent adaptive spectral methods
therefore emphasize parameter selection. Xia, Shao and Chou introduced scaling
and moving techniques based on frequency and exterior-error indicators
\cite{XiaShaoChou2021}. Chou, Shao and Xia further analyzed adaptive Hermite
spectral methods and related their effectiveness to frequency indicators and
$p$-refinement \cite{ChouShaoXia2023}. More recently, Hu and Yu developed a
bandwidth-type error analysis for scaled generalized Hermite and Laguerre
approximations, giving systematic guidance for scaling factors and convergence
behavior beyond the classical weighted Sobolev theory \cite{HuYu2026}. These
works show that the efficiency of unbounded-domain spectral approximations is
closely related to how well the basis scale and resolution match the spatial
and frequency content of the target function.

Frame-based extension approximation gives another useful viewpoint. Fourier
extension approximates a nonperiodic function on a finite interval by a
Fourier series on an enlarged interval. The resulting system is redundant and
ill-conditioned, but regularized Fourier extension approximations can be
stable and accurate in finite precision
\cite{AdcockHuybrechsMartinVaquero2014,AdcockRuan2014,MatthysenHuybrechs2016}.
Hermite extension follows the same extension philosophy with basis functions
defined on the whole real line. In \cite{Zhao2021HermiteExtension}, functions
given on arbitrary finite intervals were extended to $\mathbb R$ by a
regularized Hermite-function approximation and the resulting stable extension
was used for numerical differentiation. The common advantage of extension
frames is that redundancy provides flexibility: one may use a basis system
whose natural domain or analytic structure is more favorable than the original
computational interval.

Nevertheless, global frame approximations remain inefficient when the target
function contains localized oscillations, endpoint layers, or piecewise smooth
structures. Since all basis functions act on the whole interval, a local
irregularity or local high-frequency component may force a large global
expansion order. This motivates localized frame approximations. In our previous
work \cite{ZhaoWang2026LFE}, a local Fourier extension (LFE) method was
developed by partitioning the target interval into subintervals and applying
stabilized Fourier extension on each local piece. After mapping a subinterval
of length $h$ to a reference interval, an oscillation $e^{i\omega x}$ has an
effective frequency proportional to $\omega h$. Thus subdivision reduces the
local frequency scale and allows accurate reconstruction on small local
systems. The same localization also permits the local FE matrices and their
SVDs to be precomputed and reused. Related multi-interval FE constructions
have been used for numerical differentiation \cite{zhao2025fast}.

The localization mechanism is also useful for identifying internal
low-regularity points. In a related LFE quadrature study \cite{liu2026high},
windows containing derivative discontinuities were detected by an abnormal
increase of the local coefficient energy. One-sided LFE reconstructions were
then used to localize the singular point with subgrid accuracy. This feature is
important for the present work because the finite core may contain continuous
but piecewise smooth structures. Once an internal edge is detected and aligned
with the local partition, the LFE approximation is again applied on smooth
one-sided pieces.

The present paper extends the localized frame viewpoint to approximation on
unbounded domains. The key observation is that the finite core and the
far-field tails often have different numerical characters. The core may contain
nonperiodic oscillations, local layers, and low-regularity structures, for
which LFE is appropriate. The tails, by contrast, are often governed by a small
number of dominant oscillatory modes multiplied by slowly varying decaying
envelopes. It is therefore inefficient to use a single global Hermite or
Laguerre system for the whole real line. We instead use a core--tail frame
representation: LFE in the finite core and modulated Laguerre frames in the two
semi-infinite tails.

The domain is decomposed as
\[
    \mathbb R=(-\infty,a]\cup[a,b]\cup[b,\infty),
\]
where $[a,b]$ is the finite core. In the core, local Fourier extension frames
resolve nonperiodic, oscillatory, and locally nonsmooth structures. In each
tail, shifted and scaled Laguerre functions are multiplied by sine and cosine
modulation factors. The oscillatory carrier is represented by the modulation,
while the Laguerre functions approximate only the decaying envelope. The
modulation centers are estimated from local Fourier extension coefficients in
near-interface detection windows. Thus the LFE component has three roles: it
provides the core approximation, supplies tail frequency information, and
provides coefficient-energy indicators for internal edge detection.

The main contributions of this work are as follows. First, we develop a
localized core--tail Fourier--Laguerre frame approximation on the real line,
which combines local Fourier extension in the finite core with modulated
Laguerre frames in the decaying tails. Second, we exploit the local frequency
scaling property of LFE to construct an adaptive frequency-detection mechanism
for selecting effective tail modulation centers. Third, we incorporate an LFE
coefficient-energy based edge indicator, allowing continuous piecewise smooth
functions to be accurately approximated by aligning the local partition with
detected internal structures. Fourth, we establish an error framework that
separates the contributions from the core approximation error, Laguerre
envelope approximation error, residual phase caused by frequency mismatch,
finite-tail truncation, and TSVD regularization.

In addition, the proposed framework has two practical advantages over
traditional global spectral approximations. On the one hand, the approximation
is formulated through local least-squares problems and therefore does not
require problem-dependent Gaussian quadrature nodes, such as Gauss--Hermite or
Gauss--Laguerre points. The method can directly employ uniform sampling or
existing discrete data, which makes it more convenient for data-driven
scientific computing applications. On the other hand, the localization
strategy significantly reduces the computational cost. Instead of solving a
large global spectral system, the proposed method only requires a collection of
small local FE systems in the core and a few low-dimensional modulated
Laguerre systems in the tails. Consequently, the computational complexity grows
mainly with the number of local subintervals rather than with a global
expansion size.

Finally, we validate the proposed method through numerical experiments
including smooth and oscillatory approximation problems, derivative-jump
examples, comparisons with Hermite approximations, and a decaying model problem
on the real line.

The rest of this paper is organized as follows. Section~\ref{sec:lfe}
reviews the LFE approximation, internal edge detection, and adaptive frequency
detection. Section~\ref{sec:tail} introduces the modulated Laguerre frame on
semi-infinite tails. Section~\ref{sec:approximation} presents approximation
and error estimates. Section~\ref{sec:param} discusses parameter selection and
computational complexity. Numerical experiments are reported in
Section~\ref{sec:numerical}. Section~\ref{sec:conclusion} concludes the paper.

\section{Local Fourier extension approximation and adaptive frequency detection}
\label{sec:lfe}

In this section, we describe the local Fourier extension (LFE)
approximation used in the finite core region and explain how it provides
structural and frequency information for the proposed core--tail
approximation. The role of LFE in the present method is threefold. First,
it gives a localized frame approximation for nonperiodic and oscillatory
functions on bounded intervals. Second, the local coefficient information
can be used to detect internal low-regularity points, such as derivative
discontinuities, inside the core region. Third, the adaptive local scale
near the core--tail interfaces is used to determine the frequency range in
which the tail modulation frequencies are detected.

\subsection{Fourier extension on a finite interval}

Let \(I=[x_0,x_1]\) be a finite interval and let \(f\) be sampled at
equispaced points in \(I\). We map \(I\) to the reference interval
\([-1,1]\) by
\[
    t=\frac{2(x-x_c)}{h},
    \qquad
    x_c=\frac{x_0+x_1}{2},
    \qquad
    h=x_1-x_0 .
\]
For a fixed extension parameter \(T>1\), the Fourier extension
approximation of order \(N\) is written as
\[
    f_N(x)
    =
    \sum_{\ell=-N}^{N}c_\ell
    \exp\left(\frac{i\pi \ell t(x)}{T}\right).
    \label{eq:fe-expansion}
\]
The mode \(\ell\) corresponds to the physical frequency
\[
    \kappa_\ell
    =
    \frac{2\pi\ell}{Th}.
    \label{eq:physical-frequency}
\]
Thus, for a fixed local order \(N\), decreasing the interval length \(h\)
increases the physical frequency range that can be represented.

Let
\[
    x_j=x_0+\frac{j-1}{m-1}(x_1-x_0),
    \qquad j=1,\ldots,m,
\]
be the sampling points and let \(f_j=f(x_j)\). The discrete Fourier
extension system is
\[
    A c \approx f,
\]
where
\[
    A_{j,\ell}
    =
    \exp\left(\frac{i\pi \ell t(x_j)}{T}\right),
    \qquad
    \ell=-N,\ldots,N.
\]
The Fourier extension system is redundant and may be ill-conditioned.
Therefore, the coefficient vector is computed by truncated singular value
decomposition (TSVD). If
\[
    A=U\Sigma V^*
\]
is the singular value decomposition, then for a threshold \(\epsilon>0\)
we use
\[
    c^\epsilon
    =
    \sum_{\sigma_j>\epsilon}
    \frac{u_j^*f}{\sigma_j}v_j .
    \label{eq:tsvd-fe}
\]
The regularized approximation is then
\[
    f_N^\epsilon(x)
    =
    \sum_{\ell=-N}^{N}c_\ell^\epsilon
    \exp\left(\frac{i\pi \ell t(x)}{T}\right).
\]
The redundancy of the frame is useful because it allows nonperiodic
functions on \(I\) to be approximated by a Fourier representation on a
larger interval, while TSVD suppresses the unstable components associated
with small singular values.

\subsection{Local Fourier extension in the core region}

The finite core interval \([a,b]\) is partitioned into subintervals
\[
    a=x_0<x_1<\cdots<x_K=b,
    \qquad
    I_k=[x_{k-1},x_k],
    \quad k=1,\ldots,K .
\]
On each \(I_k\), we construct an independent Fourier extension
approximation
\[
    f_k^\epsilon(x)
    =
    \sum_{\ell=-N}^{N}c_{k,\ell}^{\epsilon}
    \exp\left(\frac{i\pi \ell t_k(x)}{T}\right),
    \qquad x\in I_k,
    \label{eq:local-fe}
\]
where
\[
    t_k(x)
    =
    \frac{2(x-x_{c,k})}{h_k},
    \qquad
    x_{c,k}=\frac{x_{k-1}+x_k}{2},
    \qquad
    h_k=x_k-x_{k-1}.
\]
The core approximation is defined piecewise by
\[
    f_C^\epsilon(x)
    =
    f_k^\epsilon(x),
    \qquad x\in I_k .
\]

The localization is essential for oscillatory functions. If
\(f(x)=e^{i\omega x}\), then on \(I_k\) the mapped function is
\[
    e^{i\omega x}
    =
    e^{i\omega x_{c,k}}
    e^{i(\omega h_k/2)t_k}.
\]
Hence the effective frequency in the reference variable is proportional to
\(\omega h_k\). By reducing \(h_k\), a highly oscillatory function in the
physical variable becomes a moderate-frequency function in the local
reference variable. This is the basic reason why LFE can resolve
high-frequency structures with a small local order \(N\).

The largest physical frequency represented by the local FE basis is
approximately
\[
    \kappa_{\max,k}
    \approx
    \frac{2\pi N}{T h_k}.
    \label{eq:local-kmax}
\]
Therefore, a practical local resolution condition is
\[
    \frac{2\pi N}{T h_k}
    \gtrsim
    \omega_{\max,k},
    \label{eq:lfe-resolution}
\]
where \(\omega_{\max,k}\) denotes the largest significant frequency in
\(I_k\). For a uniform partition with \(h=(b-a)/K\), this gives the
guideline
\[
    K
    \gtrsim
    \frac{(b-a)T\omega_{\max}}{2\pi N}.
\]
In the actual implementation, the partition may also be generated
adaptively by monitoring the local residual. When the residual on a
subinterval is not sufficiently small, the subinterval is divided. This
adaptive process produces smaller intervals in regions with stronger
oscillation or lower regularity.

Another computational advantage of the local construction is that all
subintervals can be mapped to the same reference interval. Thus the local
Fourier extension matrices and their singular value decompositions can be
precomputed and reused whenever the same values of \(m\), \(N\), and \(T\)
are used. This makes the local frame approximation efficient even when the
number of subintervals is moderately large.

\subsection{Internal edge detection in the core region}
\label{subsec:internal-edge-detection}

The local Fourier extension coefficients also provide useful structural
information inside the core region. In particular, when a function is
continuous but has a derivative discontinuity or a localized low-regularity
point, the local window containing this point is no longer fitted from a
single smooth branch. This usually leads to an abnormal growth of the TSVD
coefficient vector. Therefore, for the \(k\)-th local window, we introduce
the coefficient-energy indicator
\[
    \eta_k=\|c_k^\epsilon\|_2 .
    \label{eq:coefficient-energy-indicator}
\]
On smooth windows, \(\eta_k\) remains moderate, whereas a window containing
an internal edge typically produces a pronounced outlier.

A detailed edge-detection and localization strategy based on this idea has
been developed in our related LFE quadrature work~\cite{liu2026high}. There, singularity-containing windows are first detected by the coefficient-energy indicator;
the grid cell containing the singular point is then localized by comparing
one-sided LFE coefficient energies; finally, one-sided local Fourier
extension models are used to estimate the singular point with subgrid
accuracy. The method is designed for continuous piecewise smooth functions,
especially those with derivative discontinuities, and can recover high
accuracy after the local partition is aligned with the detected point.

In the present work, we only use this mechanism as a lightweight structural
tool in the core region. If an internal edge is detected, the core partition
is refined or aligned with the estimated edge location, so that subsequent
local Fourier extension approximations are constructed on one-sided smooth
pieces. This allows the core--tail framework to handle functions that are
continuous but not globally smooth in the finite core.

\subsection{Adaptive frequency detection near the interfaces}

The same local scale information is used to determine the frequency range
for the tail modulation. We describe the right interface \(x=b\); the left
interface \(x=a\) is treated analogously.

Let
\[
    I_{\rm end}=[b-h_{\rm end},b]
\]
denote the terminal subinterval adjacent to the right interface $x=b$,
where $h_{\rm end}$ is the length of the last local LFE element generated by
the adaptive partition. The local FE approximation on this interval uses
$N$ Fourier modes and an extension parameter $T$. After mapping the physical
interval $I_{\rm end}$ onto the reference interval $[-1,1]$, the highest
physical frequency that can be represented by the local FE basis is
approximately
\[
    \omega_{\rm loc}
    \approx
    \frac{2\pi N}{T h_{\rm end}},
    \label{eq:terminal-frequency-scale}
\]
where $\omega_{\rm loc}$ denotes the effective local frequency resolution of
the terminal LFE block. Therefore, $h_{\rm end}$ provides a practical
indicator of the local oscillatory scale near the interface: a smaller
$h_{\rm end}$ usually corresponds to a more oscillatory or less regular
interface neighborhood, while a larger $h_{\rm end}$ indicates a smoother
local structure.

To detect the dominant oscillatory components near the interface, we introduce
a local frequency-detection window
\[
    W_R=[b-L_{\rm det},b],
\]
where $L_{\rm det}$ is the detection-window length. Instead of using a fixed
global window, we determine it adaptively according to the terminal LFE scale:
\[
    L_{\rm det}
    =
    \min\left\{
        L_{\max},
        \max\left\{
            L_{\min},
            r_{\rm det}h_{\rm end}
        \right\}
    \right\}.
    \label{eq:Ldet-choice}
\]
Here, $L_{\min}$ and $L_{\max}$ are prescribed lower and upper bounds for the
detection window size, respectively, and $r_{\rm det}$ is a dimensionless
window enlargement factor controlling how many neighboring LFE elements are
included in the frequency analysis.

The Fourier detection system on $W_R$ uses an extension parameter
$T_{\rm det}$ and $N_{\rm det}$ Fourier modes. The corresponding maximum
detectable physical frequency is
\[
    \omega_{\max}^{\rm det}
    =
    \frac{2\pi N_{\rm det}}
         {T_{\rm det}L_{\rm det}} .
\]
To ensure that the detection system can resolve the oscillatory scale already
resolved by the terminal LFE element, we require
\[
    \frac{2\pi N_{\rm det}}
         {T_{\rm det}L_{\rm det}}
    \ge
    s_\omega
    \frac{2\pi N}
         {T h_{\rm end}},
    \label{eq:Ndet-condition}
\]
where $s_\omega>1$ is a safety factor introduced to provide additional
frequency coverage.

Consequently, the detection order is selected as
\[
    N_{\rm det}
    =
    \left\lceil
    \frac{T_{\rm det}L_{\rm det}}{2\pi}
    s_\omega
    \frac{2\pi N}{T h_{\rm end}}
    \right\rceil ,
    \label{eq:Ndet-choice}
\]
subject to prescribed minimum and maximum bounds:
\[
    N_{\min}^{\rm det}
    \le N_{\rm det}
    \le N_{\max}^{\rm det}.
\]
In the common case where the detection window has the same scale as the
terminal LFE element, namely
\[
    T_{\rm det}=T,\qquad
    L_{\rm det}=O(h_{\rm end}),
\]
the detection order satisfies
\[
    N_{\rm det}=O(N).
\]
Therefore, the frequency-detection procedure introduces only a local
computational cost comparable with one LFE block, rather than a cost determined
by the global carrier frequency of the original function.

Applying Fourier extension on \(W_R\) gives coefficients
\[
    c_{\ell}^{R,\epsilon},
    \qquad
    \ell=-N_{\rm det},\ldots,N_{\rm det}.
\]
The physical frequency associated with the positive mode \(\ell>0\) is
\[
    \kappa_\ell^R
    =
    \frac{2\pi\ell}{T_{\rm det}L_{\rm det}}.
\]
We define the frequency indicator
\[
    E_R(\kappa_\ell^R)
    =
    |c_{\ell}^{R,\epsilon}|
    +
    |c_{-\ell}^{R,\epsilon}|,
    \qquad
    \ell=1,\ldots,N_{\rm det}.
    \label{eq:freq-indicator}
\]
Dominant local peaks of \(E_R\) are used as candidate modulation centers
for the right tail. In the present implementation, a peak is retained if
it is above a relative threshold and sufficiently separated from previously
selected peaks. The resulting frequency set is denoted by
\[
    \mathcal K_R=\{\kappa_1^R,\ldots,\kappa_{q_R}^R\}.
\]
The left-tail frequency set
\[
    \mathcal K_L=\{\kappa_1^L,\ldots,\kappa_{q_L}^L\}
\]
is constructed in the same way from a detection window near \(a\).

The frequencies in \(\mathcal K_L\) and \(\mathcal K_R\) are not required
to be exact carrier frequencies. They serve as modulation centers for
dominant frequency bands. Any moderate mismatch between a true frequency
and its detected center becomes a residual oscillation in the tail
envelope, which can be absorbed by the Laguerre approximation. This point
is quantified in Section~\ref{sec:approximation}.

\section{Modulated Laguerre frame approximation on semi-infinite tails}
\label{sec:tail}

In this section, we introduce the modulated Laguerre frame approximation
used in the two semi-infinite tails. The purpose is to exploit the natural
half-line structure of Laguerre functions while avoiding the need for
Laguerre functions to resolve the full oscillatory tail. The oscillatory
carriers are represented by sine and cosine modulation factors, whereas
the Laguerre functions approximate the slowly varying decaying envelopes.

\subsection{Laguerre functions and tail mappings}

Let \(L_m(s)\) be the Laguerre polynomial of degree \(m\). We use the
Laguerre functions
\[
    \ell_m(s)=e^{-s/2}L_m(s),
    \qquad s\ge0,
    \qquad m=0,1,\ldots .
\]
They are evaluated by the standard recurrence
\[
    L_0(s)=1,\qquad L_1(s)=1-s,
\]
and
\[
    (m+1)L_{m+1}(s)
    =
    (2m+1-s)L_m(s)-mL_{m-1}(s),
    \qquad m\ge1.
\]

On the right tail \([b,\infty)\), we introduce
\[
    s_R=\alpha_R(x-b),
    \qquad x\ge b,
    \label{eq:right-tail-map}
\]
where \(\alpha_R>0\) is a scaling factor. On the left tail
\((-\infty,a]\), we use
\[
    s_L=\alpha_L(a-x),
    \qquad x\le a.
    \label{eq:left-tail-map}
\]
Thus both tails are mapped to the half-line \(s\ge0\). The scaling
parameters \(\alpha_R\) and \(\alpha_L\) adapt the Laguerre variable to the
physical decay scale of the tail envelopes.

\subsection{Single-frequency modulation}

Consider a right-tail component of the form
\[
    f_R(x)\approx A_R(x)\sin(\omega x+\varphi),
    \qquad x\ge b,
\]
where \(A_R(x)\) is a slowly varying decaying envelope. A standard
Laguerre expansion must approximate both \(A_R(x)\) and the oscillatory
carrier. Instead, we use the modulated Laguerre frame
\[
    \ell_m(s_R)\sin(\kappa x),
    \qquad
    \ell_m(s_R)\cos(\kappa x),
    \qquad m=0,\ldots,M_R,
\]
where \(\kappa\) is a modulation center. The corresponding approximation
is
\[
    f_R(x)
    \approx
    \sum_{m=0}^{M_R}a_m^R\ell_m(s_R)\sin(\kappa x)
    +
    \sum_{m=0}^{M_R}b_m^R\ell_m(s_R)\cos(\kappa x).
    \label{eq:single-tail-real}
\]
Equivalently,
\[
    f_R(x)
    \approx
    \operatorname{Re}\left\{
    e^{i\kappa x}
    \sum_{m=0}^{M_R}d_m^R\ell_m(s_R)
    \right\}.
    \label{eq:single-tail-complex}
\]

If the true carrier frequency is \(\omega\) and
\(\kappa=\omega+\delta\), then
\[
    A_R(x)e^{i\omega x}
    =
    e^{i\kappa x}
    \left(A_R(x)e^{-i\delta x}\right).
    \label{eq:freq-mismatch-envelope1}
\]
Therefore, a frequency mismatch does not directly destroy the
representation. It only modifies the envelope by the residual factor
\(e^{-i\delta x}\). In the scaled variable \(s_R=\alpha_R(x-b)\), this
residual oscillation has frequency \(|\delta|/\alpha_R\). Hence the effect
of a moderate frequency mismatch can be compensated by the Laguerre
envelope approximation.

The same construction is used on the left tail with \(s_L=\alpha_L(a-x)\).

\subsection{Multi-frequency modulated Laguerre frames}

In many problems, the far field is not strictly single-frequency, but it
is often frequency-sparse. Let
\[
    \mathcal K_R=\{\kappa_1^R,\ldots,\kappa_{q_R}^R\}
\]
be the modulation centers selected for the right tail. We approximate the
right tail by
\[
    f_R(x)
    \approx
    \sum_{p=1}^{q_R}
    \left[
    \sum_{m=0}^{M_R}a_{p,m}^R
    \ell_m(s_R)\sin(\kappa_p^R x)
    +
    \sum_{m=0}^{M_R}b_{p,m}^R
    \ell_m(s_R)\cos(\kappa_p^R x)
    \right].
    \label{eq:right-tail-mf}
\]
Similarly, for the left-tail frequency set
\[
    \mathcal K_L=\{\kappa_1^L,\ldots,\kappa_{q_L}^L\},
\]
we use
\[
    f_L(x)
    \approx
    \sum_{p=1}^{q_L}
    \left[
    \sum_{m=0}^{M_L}a_{p,m}^L
    \ell_m(s_L)\sin(\kappa_p^L x)
    +
    \sum_{m=0}^{M_L}b_{p,m}^L
    \ell_m(s_L)\cos(\kappa_p^L x)
    \right].
    \label{eq:left-tail-mf}
\]

All selected frequencies in one tail are fitted jointly. This is more
stable than fitting each frequency band separately, since the
least-squares problem automatically distributes the data among the
modulated components. The number of unknowns in one tail is
\[
    n_{\rm tail}=2q(M+1),
    \label{eq:tail-dof}
\]
where \(q\) is the number of selected modulation centers and \(M\) is the
Laguerre order. Since the far field is assumed to be frequency-sparse,
\(q\) is small and the tail system remains of moderate size.

\subsection{Discrete least-squares formulation}

We describe the right-tail system. Let
\[
    b=x_1^R<x_2^R<\cdots<x_{m_R}^R=b+L_R
\]
be sample points on the finite tail window \([b,b+L_R]\). The window
length \(L_R\) is chosen so that the remaining far-field magnitude beyond
\(b+L_R\) is negligible or irrelevant for the computational task. Set
\[
    s_j^R=\alpha_R(x_j^R-b),
    \qquad j=1,\ldots,m_R .
\]
For each frequency \(\kappa_p^R\), define
\[
    S_{j,m}^{(p)}
    =
    \ell_m(s_j^R)\sin(\kappa_p^R x_j^R),
    \qquad
    C_{j,m}^{(p)}
    =
    \ell_m(s_j^R)\cos(\kappa_p^R x_j^R),
\]
for \(m=0,\ldots,M_R\). The full right-tail matrix is
\[
    A_R=
    \left[
    S^{(1)},C^{(1)},S^{(2)},C^{(2)},\ldots,
    S^{(q_R)},C^{(q_R)}
    \right].
\]
Given the data vector
\[
    f_R=
    \left(f(x_1^R),\ldots,f(x_{m_R}^R)\right)^T,
\]
the tail coefficients are computed from
\[
    A_Rc_R\approx f_R .
\]
As for the LFE approximation, we use TSVD regularization:
\[
    c_R^\epsilon
    =
    \sum_{\sigma_j>\epsilon}
    \frac{u_j^*f_R}{\sigma_j}v_j,
    \qquad
    A_R=U\Sigma V^* .
    \label{eq:tail-tsvd}
\]
The left-tail system is constructed analogously on \([a-L_L,a]\).

The parameters \(M\), \(\alpha\), \(L\), and \(q\) determine the accuracy
and cost of the tail approximation. Their influence will be examined
separately in the parameter study and complexity analysis in the next
section.

\section{Approximation properties of the core--tail representation}
\label{sec:approximation}

This section gives error estimates for the proposed core--tail
representation. The purpose is to make explicit how the approximation error is
split among the core LFE error, the modulated Laguerre envelope error, the
residual phase caused by frequency mismatch, the finite-window truncation
error, and the TSVD regularization error.

Let
\[
    \mathbb R=(-\infty,a]\cup[a,b]\cup[b,\infty)
\]
and define the piecewise approximation
\[
 f_{
m app}(x)=
 \begin{cases}
 f_L^\epsilon(x), & x\le a,\\
 f_C^\epsilon(x), & a\le x\le b,\\
 f_R^\epsilon(x), & x\ge b.
 \end{cases}
\]
For any norm that is additive or subadditive over the three parts, such as a
continuous or discrete $L^2$ norm, the total error satisfies
\begin{equation}
\|f-f_{\rm app}\|
\le
\|f-f_L^\epsilon\|_{(-\infty,a]}
+
\|f-f_C^\epsilon\|_{[a,b]}
+
\|f-f_R^\epsilon\|_{[b,\infty)} .
\label{eq:basic-error-splitting}
\end{equation}
We next estimate the three terms separately.

\subsection{Core approximation on smooth pieces}

Assume that the core partition is
\[
    a=x_0<x_1<\cdots<x_K=b,
    \qquad I_k=[x_{k-1},x_k],
\]
and that each element is mapped to the reference interval $[-1,1]$. We first
consider the case where $f$ is smooth on every $I_k$. Let $h_k=|I_k|$ and let
$f_k^\epsilon$ be the TSVD-stabilized local Fourier extension approximation of
order $N$.

\begin{pro}[Local core error]
\label{pro:local-core-error}
Let $f\in H^s(I_k)$, $s>1/2$, on a subinterval $I_k$. For fixed extension
parameter $T>1$ and TSVD threshold $\epsilon$, the local FE approximation
satisfies
\begin{equation}
    \|f-f_k^\epsilon\|_{L^2(I_k)}
    \le
    C_T h_k^{1/2}
    \inf_{p\in\Phi_N}
    \|\tilde f_k-p\|_{L^2(-1,1)}
    +C_T\epsilon\|c_k\|_2,
\label{eq:local-fe-error-general}
\end{equation}
where $\tilde f_k$ is the mapped function on $[-1,1]$, $\Phi_N$ is the
Fourier extension space, and $c_k$ denotes a coefficient vector of a stable
near-best approximation. In particular, if $\tilde f_k\in H^s(-1,1)$, then
\begin{equation}
    \|f-f_k^\epsilon\|_{L^2(I_k)}
    \le
    C_T h_k^{1/2} N^{-s}\|\tilde f_k\|_{H^s(-1,1)}
    +C_T\epsilon\|c_k\|_2 .
\label{eq:local-fe-algebraic}
\end{equation}
\end{pro}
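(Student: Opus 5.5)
The plan is to reduce everything to the reference interval $[-1,1]$ and then invoke the known stability theory for TSVD-regularized Fourier extension frames (Adcock--Huybrechs and Adcock--Huybrechs--Martín-Vaquero, and the LFE analysis of \cite{ZhaoWang2026LFE}).

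\textbf{Step 1: reduction to the reference interval.} The affine map $x\mapsto t_k(x)=2(x-x_{c,k})/h_k$ has Jacobian $dx=\tfrac{h_k}{2}dt$. Hence for any $g$,
\[
\|g\|_{L^2(I_k)}=(h_k/2)^{1/2}\|\tilde g\|_{L^2(-1,1)}.
\]
It therefore suffices to prove
\[
\|\tilde f_k-\tilde f_k^\epsilon\|_{L^2(-1,1)}\le C_T\inf_{p\in\Phi_N}\|\tilde f_k-p\|+C_T\epsilon\|c_k\|_2 .
\]
The factor $h_k^{1/2}$ on the regularization term can be absorbed into $C_T$, since $h_k\le b-a$, or we can simply keep it. Here $\Phi_N=\mathrm{span}\{e^{i\pi\ell t/T}\}_{|\ell|\le N}$ is a frame for $L^2(-1,1)$ whose Gram matrix is the prolate-type matrix with entries $\mathrm{sinc}$-like values.

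\textbf{Step 2: stability of the TSVD solution.} We invoke the standard frame estimate for truncated SVD. For any coefficient vector $c$ with $p=\sum c_\ell\phi_\ell$,
\[
\|\tilde f-\tilde f^\epsilon\|\le \|\tilde f-p\|+\kappa_\epsilon\|\tilde f-p\|+\epsilon\|c\|_2 .
\]
This follows by splitting $c$ into its components on the singular vectors above and below $\epsilon$. The TSVD operator is a projection onto the span of the retained singular vectors and has norm bounded by $\kappa_\epsilon$. The discarded component contributes at most $\epsilon\|c\|$ in the function norm.

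For the discrete version, the sampling is equispaced with oversampling $m\ge \gamma N$. The discrete norm is then equivalent to the continuous $L^2(-1,1)$ norm on $\Phi_N$, with constants depending only on $T$ and the oversampling ratio. This uses the Marcinkiewicz--Zygmund-type inequalities for Fourier extensions from \cite{AdcockHuybrechsMartinVaquero2014}, so $\kappa_\epsilon\le C_T$.

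Taking the infimum over $p$, with $c=c_k$ the coefficients of a stable near-best approximant, yields \eqref{eq:local-fe-error-general}.

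\textbf{Step 3: the algebraic rate.} For $\tilde f_k\in H^s(-1,1)$, extend $\tilde f_k$ to a $2T$-periodic $H^s$ function $g$ using a bounded Sobolev extension operator; this is possible because $T>1$ leaves room for a smooth transition. Then $\|g\|_{H^s_{\rm per}}\le C_T\|\tilde f_k\|_{H^s(-1,1)}$. Truncating the Fourier series of $g$ at $|\ell|\le N$ gives $\|g-P_Ng\|_{L^2}\le C(\pi N/T)^{-s}\|g\|_{H^s}$, and $P_Ng$ has coefficients bounded by $\|g\|_{L^2}$. Restricting to $[-1,1]$ and inserting this $p$ into Step 2 gives \eqref{eq:local-fe-algebraic}.

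The main obstacle is the uniformity of $\kappa_\epsilon$, that is, proving that the TSVD-regularized discrete solver is bounded independently of $N$ despite the exponential ill-conditioning of $A$. I would handle this by quoting the existing result for fixed $T$ and linear oversampling rather than reproving it. I would also phrase $\|c_k\|_2$ as the coefficient norm of the specific periodic-extension approximant constructed in Step 3, which is exactly the ``stable near-best'' vector the statement refers to.
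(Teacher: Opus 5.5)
Your three steps follow the paper's proof exactly: affine rescaling, quasi-optimality of TSVD up to the discarded singular directions, and trigonometric approximation of a periodic extension. Step 1 and the approximation estimate in Step 3 are fine.

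\textbf{The gap in Step 2.} The justification you give for Step 2, the only nontrivial point, is false. You assert that for $m\ge\gamma N$ equispaced samples the discrete norm is equivalent to the $L^2(-1,1)$ norm on all of $\Phi_N$, with constants depending only on $T$ and $\gamma$.

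Suppose it held. Then plain, unregularized least-squares Fourier extension from equispaced data would be stable, with condition number at most polynomial in $N$. Its error would be bounded by a constant times the best sup-norm Fourier extension error. That gives geometric convergence in $N=m/\gamma$ for analytic $f$, which is exactly what the Platte--Trefethen--Kuijlaars impossibility theorem excludes.

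Concretely, $y=\cos(\pi t/T)$ maps the even part of $\Phi_N$ onto algebraic polynomials of degree $N$ on $[\cos(\pi/T),1]$. Near the endpoint $y=\cos(\pi/T)$, i.e.\ $t=\pm1$, the samples are essentially equispaced in $y$. So Coppersmith--Rivlin-type growth of the continuous-to-discrete ratio appears, roughly $e^{cN^2/m}=e^{cN/\gamma}$.

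The equispaced part of the Adcock--Huybrechs--Mart\'{\i}n-Vaquero paper you cite is built around this very barrier. With linear oversampling, what survives is a statement about the regularized operator, not a Marcinkiewicz--Zygmund inequality on $\Phi_N$. Your derivation of $\kappa_\epsilon\le C_T$ never uses the truncation, so it cannot be repaired as stated.

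\textbf{What is needed instead.} The argument, like the paper's one-line appeal to quasi-optimality, needs the two regularized frame constants of Adcock and Huybrechs:
\begin{itemize}
\item a bound on the data-to-function norm of the TSVD solution operator, which involves only the retained singular subspace;
\item a bound showing that the function synthesized from the discarded component $c_{\rm disc}$ of $c$ is $O(\epsilon\|c\|_2)$ in $L^2(-1,1)$.
\end{itemize}
For the second, you only know $\|Ac_{\rm disc}\|_2\le\epsilon\|c\|_2$, i.e.\ that this function is small \emph{at the samples}. Your sentence that the discarded component contributes at most $\epsilon\|c\|$ in the function norm silently assumes it. Both constants must be quoted or proved for the truncated operator, and they may depend on $\epsilon$ and $\gamma$, not only on $T$.

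\textbf{Norm of the residual.} In the resulting estimate the residual enters through its discrete sample norm $\|\tilde f_k-p\|_M$, not its $L^2$ norm. Since $\tilde f_k-p\notin\Phi_N$, no equivalence on $\Phi_N$ converts one into the other. Indeed, a narrow spike centered at a sample point has arbitrarily small $L^2$ norm but is fully seen by the data. So the first bound can only be justified with $L^\infty$, or the discrete norm, in place of $L^2$.

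For the algebraic bound you can still keep the rate $N^{-s}$. Estimate $\|g-P_Ng\|_M$ for your specific $p=P_Ng$ with a cellwise sampling inequality,
\[
\Delta\sum_j|e(t_j)|^2\lesssim\|e\|_{L^2(-1,1)}^2+\Delta^{2s}|e|_{H^s(-1,1)}^2,\qquad \Delta\sim(\gamma N)^{-1}.
\]
This is where $s>1/2$ is used. Going through the sup norm instead would lose a factor $N^{1/2}$.
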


\begin{proof}
The TSVD solution is quasi-optimal up to the discarded singular directions, so
its error is bounded by the best approximation error in the retained FE space
plus the TSVD perturbation term. The factor $h_k^{1/2}$ follows from the affine
mapping between $I_k$ and the reference interval. The algebraic bound follows
from the standard trigonometric approximation estimate on the reference
interval.
\end{proof}

The estimate shows explicitly how localization improves the resolution of
oscillatory functions. If $f(x)=e^{i\omega x}$ on $I_k$, the mapped function
contains the nondimensional frequency $\omega h_k/2$. Thus, for fixed $N$,
accuracy is controlled by the local frequency-length product rather than by
the global frequency. A practical resolution condition is
\begin{equation}
    \frac{2\pi N}{T h_k}
    \gtrsim \omega_{\max,k} .
\label{eq:core-resolution-condition}
\end{equation}

For continuous piecewise smooth functions, the same estimate applies on each
smooth piece. If a derivative discontinuity lies inside an element, the local
regularity assumption fails and spectral-type convergence is lost on that
element. This is the reason for using the internal edge detector described in
Section~\ref{subsec:internal-edge-detection}. Once the detected edge is aligned
with the partition, the local estimates again apply on the one-sided smooth
pieces.

\begin{corollary}[Core error after edge alignment]
\label{cor:edge-aligned-core}
Suppose that $f$ is continuous on $[a,b]$ and is piecewise $H^s$ with a finite
number of derivative discontinuities. If the core partition contains all true
or accurately detected breakpoints, then
\begin{equation}
    \|f-f_C^\epsilon\|_{L^2(a,b)}
    \le
    C_T
    \left(
    \sum_{k=1}^{K} h_k N^{-2s}
    \|\tilde f_k\|_{H^s(-1,1)}^2
    \right)^{1/2}
    +C_T\epsilon
    \left(
    \sum_{k=1}^{K}\|c_k\|_2^2
    \right)^{1/2} .
\label{eq:edge-aligned-core-error}
\end{equation}
If a detected breakpoint has location error $\rho$, then an additional local
term of order $O(\rho)$ is introduced for Lipschitz continuous functions with a
jump in the first derivative.
\end{corollary}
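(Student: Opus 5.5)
The plan is to reduce the corollary to Proposition~\ref{pro:local-core-error}, applied element by element, and then sum the local bounds in the $L^2$ sense.

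\emph{Step 1 (smooth pieces).} Since the partition contains every breakpoint, $f$ restricted to each $I_k$ is a single smooth branch. Hence $f|_{I_k}\in H^s(I_k)$ and $\tilde f_k\in H^s(-1,1)$, so \eqref{eq:local-fe-algebraic} gives
\[
\|f-f_k^\epsilon\|_{L^2(I_k)}\le C_T h_k^{1/2}N^{-s}\|\tilde f_k\|_{H^s(-1,1)}+C_T\epsilon\|c_k\|_2 .
\]
The constant $C_T$ depends only on $T$ and the common reference discretization, because every element is mapped to $[-1,1]$ with the same $m,N,T$. It is therefore uniform in $k$.

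\emph{Step 2 (summation).} Since $f_C^\epsilon$ is defined piecewise,
\[
\|f-f_C^\epsilon\|_{L^2(a,b)}^2=\sum_k\|f-f_k^\epsilon\|_{L^2(I_k)}^2 .
\]
Write each local bound as $\alpha_k+\beta_k$ and regard $(\alpha_k+\beta_k)_k$ as a vector in $\mathbb R^K$. Minkowski's inequality in $\ell^2$ gives
\[
\Big(\sum_k(\alpha_k+\beta_k)^2\Big)^{1/2}\le\Big(\sum_k\alpha_k^2\Big)^{1/2}+\Big(\sum_k\beta_k^2\Big)^{1/2},
\]
and this is exactly \eqref{eq:edge-aligned-core-error}. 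I want to avoid the cruder bound $(\alpha+\beta)^2\le2\alpha^2+2\beta^2$ here: it would only cost a factor $\sqrt2$, which could be absorbed into $C_T$, but Minkowski gives the stated form directly.

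\emph{Step 3 (misplaced breakpoint).} This is the main obstacle. Suppose the true breakpoint $\xi$ and the detected node $\hat\xi$ satisfy $|\xi-\hat\xi|=\rho$. Only the one or two elements adjacent to $\hat\xi$ are affected. On such an element $I_k$, decompose $f=g+r$, where $g$ is the smooth extension of the dominant branch across $\xi$.
\begin{itemize}
\item On the sliver between $\xi$ and $\hat\xi$, $r$ is the difference of two branches that agree at $\xi$, since $f$ is continuous there. If $f'$ has a bounded jump $J$, then $|r(x)|\le |J|\,|x-\xi|\le C\rho$ on the sliver, and $r\equiv0$ elsewhere on $I_k$.
\item Consequently $\|r\|_{L^\infty(I_k)}=O(\rho)$, and $\|r\|_{L^2(I_k)}=O(\rho^{3/2})$.
\item Apply Proposition~\ref{pro:local-core-error} to $g$. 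The extra contribution is then controlled by the stability of the TSVD fitting operator on data perturbations: a perturbation of size $\|r\|$ in the sampled data produces an error of at most a constant times that size. This uses that the retained singular values are $\ge\epsilon$ and that the discrete least-squares projection is bounded on $L^\infty$-type data. I expect this bounded-operator property to need the most care; I would justify it by quasi-optimality of TSVD fitting with oversampling.
\end{itemize}
Adding this $O(\rho)$ term to the smooth-element estimate gives the additional local term claimed in Corollary~\ref{cor:edge-aligned-core}. The $O(\rho)$ statement is conservative, since the $L^2$ bound in the third item is actually $O(\rho^{3/2})$.
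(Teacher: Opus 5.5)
This is essentially the paper's argument: the paper obtains \eqref{eq:edge-aligned-core-error} by applying Proposition~\ref{pro:local-core-error} on each aligned smooth element, and your $\ell^2$ Minkowski summation just makes that combination explicit. For the $O(\rho)$ term, the paper gives only a one-line remark in the proof of Theorem~\ref{thm:core-tail-error} (an interval of length $O(\rho)$ is fitted from the wrong branch), which your decomposition $f=g+r$ plus TSVD stability fleshes out.

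One correction to your closing aside, that the $O(\rho)$ bound is conservative because $\|r\|_{L^2}=O(\rho^{3/2})$. The fit sees only sample values of $r$, and the misplaced node $\hat\xi$ is itself a sample point with $|r(\hat\xi)|\approx|J|\rho$. So the extra error is genuinely first order in $\rho$ (with a grid-dependent constant), not $O(\rho^{3/2})$. Also, the stability constant you invoke has to come from oversampling/frame bounds, since $\sigma_j\ge\epsilon$ alone only gives a $1/\epsilon$ amplification.
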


\subsection{Tail approximation with exact modulation centers}

We describe the right tail; the left tail is analogous. Suppose that on
$[b,\infty)$ the tail can be written as
\begin{equation}
    f_R(x)=
    \sum_{p=1}^{q_R}
    \operatorname{Re}\left\{e^{i\kappa_p^R x}A_p^R(x)\right\},
    \qquad x\ge b,
\label{eq:exact-modulated-tail}
\end{equation}
where the envelopes $A_p^R$ are slowly varying and decaying. With
$s=\alpha_R(x-b)$, define
\[
    \widehat A_p^R(s)=A_p^R(b+s/\alpha_R).
\]
Let $\Pi_M^{\rm Lag}$ denote the approximation operator onto
$\operatorname{span}\{\ell_0,\ldots,\ell_M\}$ in the scaled Laguerre variable.

\begin{pro}[Tail envelope error]
\label{pro:tail-envelope-error}
Assume that the modulation centers in \eqref{eq:exact-modulated-tail} are
known exactly. Then the modulated Laguerre approximation on the right tail
satisfies
\begin{equation}
    \|f_R-f_{R,M}\|_{[b,\infty)}
    \le
    C
    \sum_{p=1}^{q_R}
    \left\|
    \widehat A_p^R-
    \Pi_M^{\rm Lag}\widehat A_p^R
    \right\|_{[0,\infty)} .
\label{eq:tail-envelope-error}
\end{equation}
Consequently, the convergence is governed by the Laguerre approximability of
the envelopes rather than by the carrier frequencies.
\end{pro}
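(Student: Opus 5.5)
The plan is to construct an explicit element of the modulated Laguerre frame and bound its error; the fitted approximation $f_{R,M}$ is then handled by quasi-optimality. For each $p$, let $\widehat P_p:=\Pi_M^{\rm Lag}\widehat A_p^R$ be the Laguerre approximant of the (complex) envelope in the scaled variable $s$. Write $\widehat P_p(s)=\sum_{m=0}^{M}d_{p,m}\ell_m(s)$ with $d_{p,m}=\beta_{p,m}+i\gamma_{p,m}$. Pulling it back to $x$ via $s=\alpha_R(x-b)$, we obtain $P_p(x)=\widehat P_p(\alpha_R(x-b))$ and define
\[
    g(x)=\sum_{p=1}^{q_R}\operatorname{Re}\left\{e^{i\kappa_p^R x}P_p(x)\right\}.
\]
Because the $\ell_m$ are real, we have $\operatorname{Re}\{e^{i\kappa x}d\,\ell_m\}=\ell_m(\beta\cos\kappa x-\gamma\sin\kappa x)$. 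Hence $g$ lies exactly in the span of the frame \eqref{eq:right-tail-mf}, with $a_{p,m}^R=-\gamma_{p,m}$ and $b_{p,m}^R=\beta_{p,m}$. This is the complex/real equivalence already noted in \eqref{eq:single-tail-complex}.

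Next, by \eqref{eq:exact-modulated-tail} and the triangle inequality,
\[
    |f_R(x)-g(x)|\le\sum_{p}\left|\operatorname{Re}\{e^{i\kappa_p^R x}(A_p^R-P_p)(x)\}\right|\le\sum_p|A_p^R(x)-P_p(x)|.
\]
The bound $|e^{i\kappa x}|=1$ is the key step: it removes every dependence on the carrier frequencies pointwise. Taking the $L^2(b,\infty)$ norm and changing variables $x=b+s/\alpha_R$ gives
\[
    \|A_p^R-P_p\|_{L^2(b,\infty)}=\alpha_R^{-1/2}\|\widehat A_p^R-\widehat P_p\|_{L^2(0,\infty)}.
\]
This yields \eqref{eq:tail-envelope-error} with $C=\alpha_R^{-1/2}$ for the best element of the frame, where $\|\cdot\|_{[0,\infty)}$ is read as the unweighted $L^2$ norm that is natural for Laguerre functions. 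The same argument works verbatim for a sup norm, with $C=1$.

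Finally, $f_{R,M}$ has to be related to this constructed element. If $f_{R,M}$ denotes the $L^2$ best approximation from the modulated frame, then $\|f_R-f_{R,M}\|\le\|f_R-g\|$ and we are done. If instead it is the discrete least-squares/TSVD fit on $[b,b+L_R]$, I would invoke the same quasi-optimality reasoning used for Proposition~\ref{pro:local-core-error}. That reasoning gives $\|f_R-f_{R,M}\|\le C\|f_R-g\|$ plus the truncation and regularization contributions, which the paper treats separately, so here they are absorbed into $C$ or deferred.

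I expect the main obstacle to be exactly this last point. The frame can be redundant when two centers $\kappa_p^R$ are close, so the constant relating the fitted solution to the best approximation is controlled only through the TSVD stability argument, not by orthogonality. I would state the result for the best approximation and remark that stability of the discrete fit inherits the constant from the TSVD analysis.
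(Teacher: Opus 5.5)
Your argument is correct, and it goes further than the paper. The paper states this proposition without proof. It relies only on the modulation identity \eqref{eq:freq-mismatch-envelope}, the real/complex equivalence of the modulated frame, and the remark that the Laguerre basis ``only resolves the residual envelope.'' All least-squares effects are later hidden in the phrase ``constants depending on \dots\ the frame bounds of the discrete least-squares systems'' in Theorem~\ref{thm:core-tail-error}. Your explicit competitor $g$, the pointwise bound via $|e^{i\kappa_p^R x}|=1$, and the change of variables $x=b+s/\alpha_R$ turn that heuristic into a proof with an explicit constant: $C=\alpha_R^{-1/2}$ in unweighted $L^2$ and $C=1$ in the sup norm. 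This constant is genuinely carrier-independent when $f_{R,M}$ is the best approximation from the modulated span. Your caveat about the discrete fit is also correct and sharper than the paper. For the TSVD solution on $[b,b+L_R]$, the quasi-optimality constant depends on the frame bounds of $A_R$, which can deteriorate when two centers $\kappa_p^R$ are close relative to $\alpha_R$. So the ``carriers do not matter'' conclusion is clean only for the best approximation. One small correction: the window-truncation and TSVD contributions are additive, so they cannot be ``absorbed into $C$,'' since the envelope error may vanish while they do not. They must be kept as separate terms, as the paper does in \eqref{eq:tail-window-splitting} and through $E_{\rm win}^R$ and $E_{\rm TSVD}$ in \eqref{eq:total-error-estimate}.
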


This estimate explains the advantage of modulation. Without modulation, a
Laguerre expansion must approximate both the envelope and the high-frequency
carrier. With modulation, the Laguerre basis only resolves the residual
envelope.

In computation the semi-infinite tail is replaced by a finite interval
$[b,b+L_R]$. Therefore,
\begin{equation}
    \|f_R-f_{R,M}^{\rm app}\|_{[b,\infty)}
    \le
    \|f_R-f_{R,M}^{\rm app}\|_{[b,b+L_R]}
    +\|f_R\|_{[b+L_R,\infty)} .
\label{eq:tail-window-splitting}
\end{equation}
For exponentially decaying tails, the second term decreases exponentially in
$L_R$.

\subsection{Frequency mismatch and residual phase}

The detected modulation centers do not have to be exact. Suppose that a true
tail component is $A(x)e^{i\omega x}$, but the selected modulation center is
$\kappa=\omega+\delta$. Then
\begin{equation}
    A(x)e^{i\omega x}
    =e^{i\kappa x}\left(A(x)e^{-i\delta x}\right).
\label{eq:freq-mismatch-envelope}
\end{equation}
Thus the frequency mismatch changes only the envelope. In the scaled variable
$s=\alpha(x-b)$,
\[
    e^{-i\delta x}=e^{-i\delta b}e^{-i(\delta/\alpha)s}.
\]
The relevant nondimensional mismatch parameter is therefore
\begin{equation}
    \nu=\frac{|\delta|}{\alpha}.
\label{eq:mismatch-parameter}
\end{equation}
Equivalently, on a finite tail window of length $L$, the accumulated residual
phase is $|\delta|L$.

\begin{pro}[Tail error with detected modulation centers]
\label{pro:mismatched-tail-error}
Let the true tail frequencies $\omega_j$ be assigned to detected modulation
centers $\kappa_{\pi(j)}$, and set $\delta_j=\omega_j-\kappa_{\pi(j)}$. Then
the modulated Laguerre approximation satisfies the abstract bound
\begin{equation}
    \|f_R-f_{R,M}^{\delta}\|_{[b,\infty)}
    \le
    C\sum_j
    \left\|
    \widehat A_j(s)e^{i(\delta_j/\alpha)s}
    -
    \Pi_M^{\rm Lag}
    \left(\widehat A_j(s)e^{i(\delta_j/\alpha)s}\right)
    \right\|_{[0,\infty)} .
\label{eq:mismatched-tail-error}
\end{equation}
Thus the influence of frequency detection enters through the residual
parameters $|\delta_j|/\alpha$, not through the original frequencies
$|\omega_j|$.
\end{pro}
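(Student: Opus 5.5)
The plan is to reduce the statement to Proposition~\ref{pro:tail-envelope-error} by a change of envelope. For each true component $A_j(x)e^{i\omega_j x}$ I will use identity \eqref{eq:freq-mismatch-envelope} with $\kappa=\kappa_{\pi(j)}$:
\[
A_j(x)e^{i\omega_j x}=e^{i\kappa_{\pi(j)}x}\bigl(A_j(x)e^{i\delta_j x}\bigr),
\]
where $\delta_j=\omega_j-\kappa_{\pi(j)}$; the sign convention follows the statement. In the scaled variable $s=\alpha(x-b)$ the new envelope is
\[
B_j(s)=e^{i\delta_j b}\,\widehat A_j(s)\,e^{i(\delta_j/\alpha)s}.
\]
The constant phase $e^{i\delta_j b}$ has modulus one and commutes with the linear operator $\Pi_M^{\rm Lag}$, so it can be dropped from every error norm. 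After grouping the components assigned to the same center,
\[
f_R=\sum_p \operatorname{Re}\Bigl\{e^{i\kappa_p x}\sum_{j:\pi(j)=p} B_j\Bigr\}.
\]
This is exactly the exact-center form \eqref{eq:exact-modulated-tail}, with modulated envelopes $\widetilde A_p=\sum_{\pi(j)=p}B_j$.

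Next I define a comparison approximant $f_{R,M}^{\delta}$ in the modulated frame. It takes $\Pi_M^{\rm Lag}\widetilde A_p$ in each modulated slot, written through the real sine and cosine pairs as in \eqref{eq:single-tail-complex}. The difference $f_R-f_{R,M}^\delta$ is then $\sum_p\operatorname{Re}\{e^{i\kappa_p x}(\widetilde A_p-\Pi_M^{\rm Lag}\widetilde A_p)\}$. Since $|\operatorname{Re}(e^{i\kappa x}g)|\le|g|$ pointwise and the norm is monotone, the triangle inequality gives a bound of $\sum_p\|\widetilde A_p-\Pi_M^{\rm Lag}\widetilde A_p\|$. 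The change of variable $x\mapsto s$ contributes only the factor $\alpha^{-1/2}$ in $L^2$, which is absorbed into $C$. Linearity of $\Pi_M^{\rm Lag}$ then splits each $\widetilde A_p$ term into the sum over $j$, which yields \eqref{eq:mismatched-tail-error}. Equivalently, I can invoke Proposition~\ref{pro:tail-envelope-error} directly with envelopes $\widetilde A_p$.

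The step I expect to be the main obstacle is connecting this comparison approximant to the computed TSVD least-squares solution. The frame fit is joint, so its error is quasi-optimal only up to frame constants. I will argue as in Proposition~\ref{pro:local-core-error}: the least-squares/TSVD solution is near-best in the span of the modulated frame, up to a constant depending on the conditioning of the retained subspace and an $\epsilon$-term. Its error is therefore bounded by $C$ times the error of any element of that span, in particular $f_{R,M}^\delta$. Because the claimed bound is stated as abstract, with a generic $C$, this near-best property is all that is required. I will present it as an assumption of the same type used in Proposition~\ref{pro:local-core-error} rather than prove frame stability.

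Finally, I will remark on the interpretation. $\widehat A_j e^{i(\delta_j/\alpha)s}$ depends on $\omega_j$ only through $\nu_j=|\delta_j|/\alpha$. Hence the Laguerre approximability, and with it the bound, is governed by the residual parameters and not by $|\omega_j|$.
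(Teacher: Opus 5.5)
Your proposal is correct and follows the paper's route. The paper justifies this proposition only through the discussion preceding it: rewrite each mismatched component as $A_j e^{i\omega_j x}=e^{i\kappa_{\pi(j)}x}\bigl(A_j e^{i\delta_j x}\bigr)$, pass to $s=\alpha(x-b)$ so that only the harmless unimodular constant $e^{i\delta_j b}$ appears, and then appeal to Proposition~\ref{pro:tail-envelope-error} for the modified envelopes. Your grouping by center, your use of the linearity of $\Pi_M^{\rm Lag}$, and your consistent handling of the sign convention $\delta_j=\omega_j-\kappa_{\pi(j)}$ simply make explicit what the paper leaves implicit.

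One caveat concerns your final step on the computed TSVD fit. The TSVD error bound contains an additive $\epsilon$-term, which cannot be absorbed into a multiplicative $C$. In addition, the fit is near-best only in the discrete norm on the finite window $[b,b+L_R]$, not in the norm on $[b,\infty)$. This is why the paper keeps $E_{\rm TSVD}$ and $E_{\rm win}^R$ as separate terms in Theorem~\ref{thm:core-tail-error}. If you read $f_{R,M}^{\delta}$ as your projection-based comparison approximant, your argument is already complete without that step.
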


A useful consequence concerns close frequencies. If two components have
frequencies $\omega_1$ and $\omega_2$, then
\[
    e^{i\omega_1x}+e^{i\omega_2x}
    =2e^{i\bar\omega x}
    \cos\left(\frac{\Delta x}{2}\right),
    \qquad
    \bar\omega=\frac{\omega_1+\omega_2}{2},
    \quad \Delta=\omega_2-\omega_1 .
\]
If both components are represented by the single modulation center
$\bar\omega$, the beat frequency becomes part of the envelope, with
nondimensional scale
\begin{equation}
    \nu_{\rm close}=\frac{|\Delta|}{2\alpha}.
\end{equation}
Therefore close frequencies need not always be separated by the detector,
provided that the residual beat can be resolved by the Laguerre envelope.

\subsection{A combined error estimate}

Combining the preceding estimates gives the following summary bound.

\begin{theorem}[Core--tail approximation error]
\label{thm:core-tail-error}
Assume that the core partition is aligned with all internal breakpoints, or
with detected breakpoints whose maximum location error is $\rho$. Assume also
that each tail has a frequency-sparse representation with decaying envelopes,
and let the detected modulation centers have residual parameters
$\nu_j=|\omega_j-\kappa_{\pi(j)}|/\alpha$. Then, up to constants depending on
$T$ and on the frame bounds of the discrete least-squares systems,
\begin{equation}
\begin{aligned}
\|f-f_{\rm app}\|
\lesssim
&\; E_{\rm core}(N,K,T;f)
+E_{\rm Lag}^L(M_L,\alpha_L,\{\nu_j^L\})
+E_{\rm Lag}^R(M_R,\alpha_R,\{\nu_j^R\})  \\
&\; +E_{\rm win}^L+E_{\rm win}^R
+E_{\rm TSVD}
+E_{\rm edge}(\rho).
\end{aligned}
\label{eq:total-error-estimate}
\end{equation}
Here $E_{\rm core}$ is the sum of local FE errors on the smooth core pieces,
$E_{\rm Lag}^{L,R}$ are the Laguerre approximation errors of the residual
envelopes after modulation, $E_{\rm win}^{L,R}$ are the finite-window tail
truncation errors, $E_{\rm TSVD}$ is the regularization error, and
$E_{\rm edge}(\rho)=O(\rho)$ for Lipschitz continuous functions with derivative
jumps.
\end{theorem}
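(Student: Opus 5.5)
The plan is to assemble the bound from the three-way splitting \eqref{eq:basic-error-splitting} and then estimate each piece with the results already stated in this section. In a norm that is subadditive over $(-\infty,a]$, $[a,b]$, and $[b,\infty)$, the total error is at most the sum of the left-tail, core, and right-tail errors, so it suffices to show that each of these three terms is bounded by the corresponding group of contributions in \eqref{eq:total-error-estimate}.

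\textbf{Core.} The breakpoints may be exactly aligned or only detected to within $\rho$.

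\begin{itemize}
\item If they are exactly aligned, Corollary~\ref{cor:edge-aligned-core} bounds the core error by the square-summed local FE errors plus the TSVD contribution $C_T\epsilon\bigl(\sum_k\|c_k\|_2^2\bigr)^{1/2}$. I would call the first part $E_{\rm core}(N,K,T;f)$ and put the second into $E_{\rm TSVD}$.
\item If a breakpoint is detected with error $\rho$, only the element(s) adjacent to the detected point see the true kink. On such an element I would write $f=g+r$, where $g$ is the smooth one-sided extension across the detected point. Since $f$ is Lipschitz with a first-derivative jump, the remainder $r$ is supported on an interval of length at most $\rho$ and satisfies $|r|\le C\rho$ there, so $\|r\|\le C\rho$ in the sup norm and smaller in $L^2$.
\item The local estimate of Proposition~\ref{pro:local-core-error} then applies to $g$. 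The term $r$ is handled by the boundedness of the TSVD reconstruction operator on the element, whose norm is controlled by the frame constants and $T$. This yields $E_{\rm edge}(\rho)=O(\rho)$, which is the second statement of Corollary~\ref{cor:edge-aligned-core}.
\end{itemize}

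\textbf{Tails.} I treat the right tail; the left tail is symmetric.

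\begin{itemize}
\item By the representation \eqref{eq:exact-modulated-tail} and the mismatch identity \eqref{eq:freq-mismatch-envelope}, each true component $A_j e^{i\omega_j x}$ equals $e^{i\kappa_{\pi(j)}x}\,\widehat A_j(s)\,e^{-i\delta_j b}e^{i(\delta_j/\alpha)s}$. So the tail is again an exactly modulated sum, with envelopes $\widehat A_j e^{i(\delta_j/\alpha)s}$ attached to the detected centers. Components sharing a center are summed into one envelope, which also covers the beat case.
\item Proposition~\ref{pro:mismatched-tail-error} then bounds the semi-infinite tail error by the Laguerre best-approximation errors of these residual envelopes. This quantity depends only on $M_R$, $\alpha_R$ and $\nu_j^R$, and I define it as $E^R_{\rm Lag}$.
\item Since the computation uses only $[b,b+L_R]$, I apply \eqref{eq:tail-window-splitting}. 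This adds $E_{\rm win}^R=\|f_R\|_{[b+L_R,\infty)}$, together with the error of the approximant outside the window. That latter error decays with the Laguerre weight and can be absorbed into the same term.
\item The TSVD tail solution \eqref{eq:tail-tsvd} differs from the least-squares near-best fit by at most $C\epsilon\|c_R\|$, with the constant set by the frame bounds. This is added to $E_{\rm TSVD}$.
\end{itemize}

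\textbf{Main obstacle.} The delicate step is moving from the continuous best-approximation quantities ($\Pi_M^{\rm Lag}$, $\inf_{p\in\Phi_N}$) to the actual discrete, TSVD-regularized least-squares solutions. This requires quasi-optimality of the discrete fits, i.e., a discrete norm equivalence (frame bounds) on the sampled finite windows that is uniform enough to pass back to continuous norms. I would not prove this from scratch. Instead I would state it as the hypothesis implicit in the phrase ``up to constants depending on $T$ and on the frame bounds'': a sampling inequality $\|p\|_{L^2}\le C\|p\|_{\ell^2(\text{samples})}$ on the finite-dimensional frame spaces, combined with the standard TSVD frame-approximation estimate (as in Proposition~\ref{pro:local-core-error}). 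With this assumption the argument reduces to summing the pieces, and the constants are collected into $\lesssim$.
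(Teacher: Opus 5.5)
Your proposal follows the paper's proof essentially step for step: the splitting \eqref{eq:basic-error-splitting}, Corollary~\ref{cor:edge-aligned-core} for the core, Proposition~\ref{pro:mismatched-tail-error} for the tails, the window and TSVD terms added by the triangle inequality, and an $O(\rho)$ misassigned interval for a displaced edge; your $f=g+r$ decomposition and the explicit discrete quasi-optimality hypothesis just make the paper's sketch more precise. One caution: your claim that the approximant beyond $b+L_R$ ``decays with the Laguerre weight'' and can be absorbed is not justified, because with $\alpha_R L_R=4M/5$ the functions $\ell_m$ with $m$ near $M$ are still oscillatory and carry much of their $L^2$ mass on $\alpha_R L_R\le s\lesssim 4m$, where a fit on $[b,b+L_R]$ does not control them, so instead set $f_R^\epsilon\equiv 0$ beyond the window (equivalently, measure the error only on the computational domain, as the experiments do), which is exactly what makes \eqref{eq:tail-window-splitting} valid with $E_{\rm win}^R=\|f_R\|_{[b+L_R,\infty)}$.
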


\begin{proof}
The estimate follows from the basic splitting
\eqref{eq:basic-error-splitting}. The core term is bounded by
Proposition~\ref{pro:local-core-error} and Corollary~\ref{cor:edge-aligned-core}.
The two tail terms are bounded by Proposition~\ref{pro:tail-envelope-error},
with the modification described in Proposition~\ref{pro:mismatched-tail-error}
when detected frequencies are not exact. The finite-tail window and TSVD terms
are then added by the triangle inequality. If the detected edge is displaced by
$\rho$, only an interval of length $O(\rho)$ is assigned to the wrong smooth
branch, giving the stated $O(\rho)$ contribution for Lipschitz data.
\end{proof}

The theorem clarifies the role of the proposed construction. The core error is
controlled by local smoothness and local frequency-length products; the tail
error is controlled by the approximability of the residual envelopes; and
frequency detection only needs to make the residual phase moderate. These
observations motivate the parameter choices and numerical tests in the next two
sections.

\section{Parameter selection and complexity analysis}
\label{sec:param}

\subsection{Scaling rule for the Laguerre tail approximation}

The scaling parameter determines how the physical tail interval is mapped
to the Laguerre coordinate. For a tail interval of length \(L\), we introduce
\[
    s=\alpha x,\qquad x\in[0,L].
\]
Thus the physical interval \([0,L]\) is mapped to \([0,\alpha L]\) in the
Laguerre variable. If \(\alpha\) is too small, the Laguerre coordinate range
is too short to resolve the variation of the envelope on the whole tail
window. If \(\alpha\) is too large, the effective Laguerre coordinate range
becomes unnecessarily long and the least-squares approximation may become
less stable. Hence \(\alpha\) should be coupled with both the tail length
\(L\) and the Laguerre order \(M\).

We write the scaling parameter in the form
\[
    \alpha=\frac{CM}{L},
\]
where \(C\) is a dimensionless constant. Equivalently,
\[
    \alpha L=CM .
\]
This means that, as the Laguerre order increases, the effective
computational range in the Laguerre coordinate also increases
proportionally. The parameter \(C\) controls how rapidly this effective
range grows with \(M\).

To select a robust value of \(C\), we consider the envelope approximation
problem
\[
    g(x)=e^{-\beta x},\qquad x\in[0,L],
\]
for several decay rates \(\beta\). The approximation is represented as
\[
    g_M(x)=\sum_{m=0}^{M}c_m e^{-\alpha x/2}L_m(\alpha x),
\]
where the coefficients are computed by a TSVD-regularized least-squares
procedure.

Figure~\ref{fig:scaling_rule} shows the worst-case approximation error over
the tested decay rates in the \((C,M)\) parameter plane. The results indicate
that very small values of \(C\) are not optimal for moderate Laguerre orders,
because the mapped interval \([0,\alpha L]\) is too short. On the other hand,
excessively large values of \(C\) may also deteriorate the approximation. A
stable low-error region is observed for moderate values of \(C\), especially
around
\[
    C\approx 0.8 .
\]
This region is particularly relevant for the present algorithm, because the
tail least-squares cost depends strongly on the Laguerre order. Indeed, if
\(q\) modulation frequencies are used, the number of tail unknowns is
approximately
\[
    n_t=2q(M+1),
\]
and a direct SVD solve scales cubically with \(n_t\). Therefore, it is
preferable to use a moderate Laguerre order together with a scaling parameter
that gives sufficient resolution in the tail coordinate.

Accordingly, in the following experiments we adopt
\[
    C=\frac45,
    \qquad
    \boxed{\alpha=\frac{4M}{5L}}.
\]
With this choice, \(M=25\)--\(30\) already gives a sufficiently large
effective Laguerre coordinate range for the tested tail envelopes. Compared
with using a larger Laguerre order with a more conservative scaling, this
choice reduces the dimension of the tail least-squares systems while
maintaining high accuracy.

\begin{figure}[htbp]
\centering
\includegraphics[width=0.65\textwidth]{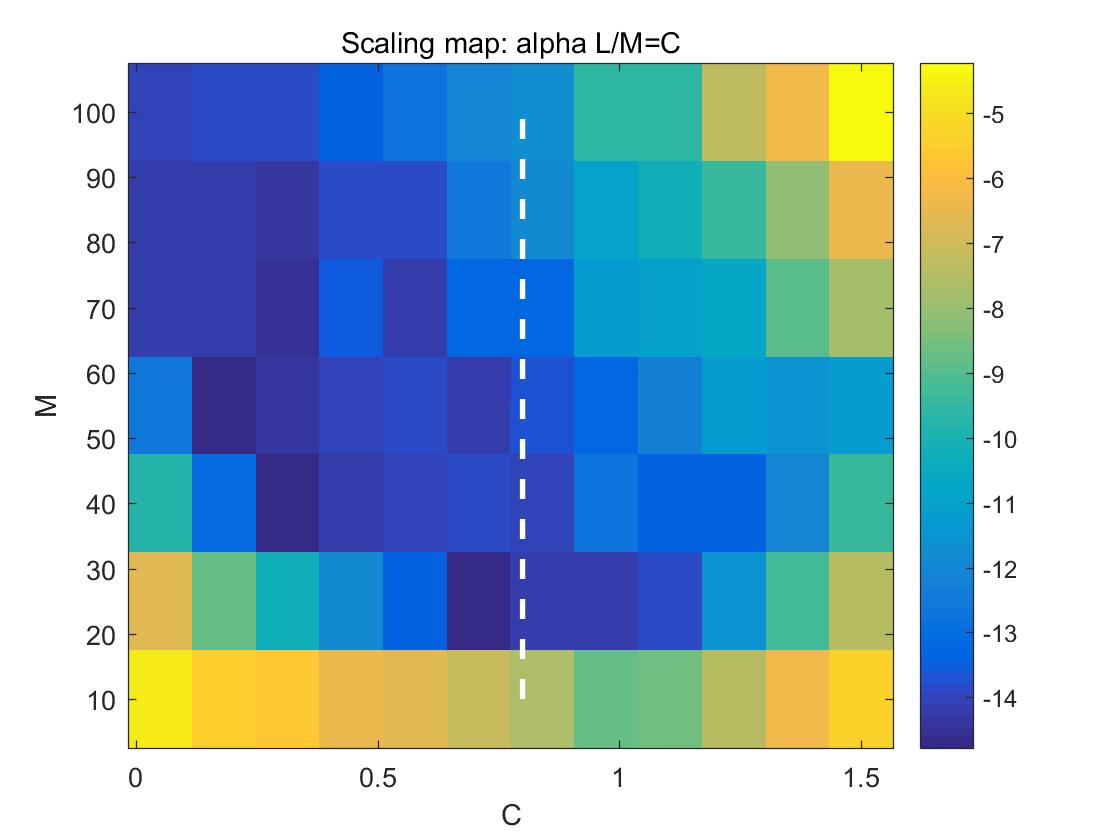}
\caption{
Scaling map for the rule \(\alpha=CM/L\). The color represents
\(\log_{10}(\max_\beta E)\), where \(E\) is the maximum approximation error
over different decay rates. The dashed line indicates \(C=4/5\), which lies
in a stable low-error region for moderate Laguerre orders and is adopted in
the subsequent computations.
}
\label{fig:scaling_rule}
\end{figure}

\subsection{Influence of the residual modulation frequency}

After the scaling rule has been fixed as
\[
    \alpha=\frac{4M}{5L},
\]
we examine how the accuracy of the modulation frequency affects the
required Laguerre order. Suppose that a tail component has carrier frequency
\(\omega\), while the detected modulation center is \(\kappa\). After
modulation, the remaining oscillation in the envelope is governed by the
residual frequency
\[
    \delta=|\omega-\kappa|.
\]
On a tail interval of length \(L\), the accumulated residual phase is
\[
    \Lambda=\delta L=|\omega-\kappa|L .
\]
This quantity measures how many residual oscillations must still be
resolved by the Laguerre envelope after the dominant carrier has been
removed. It plays a role analogous to the local frequency-length product in
local Fourier extension methods.

To isolate this effect, we consider the model envelope problem
\[
    g(x)=e^{-\beta x}\cos(\delta x),\qquad x\in[0,L],
\]
with several decay rates \(\beta\). For each value of
\(\Lambda=\delta L\), we increase the Laguerre order \(M\) and use the
scaling rule \(\alpha=4M/(5L)\). The minimal order \(M_{\min}\) is recorded
when the approximation error falls below the prescribed tolerance.

Figure~\ref{fig:residual_phase_complexity} shows the dependence of
\(M_{\min}\) on the accumulated residual phase. The results indicate an
approximately linear growth of the required Laguerre order with respect to
\(\Lambda\). This is consistent with the interpretation that, after
modulation, the Laguerre expansion only needs to resolve the residual
oscillation in the envelope, rather than the original carrier frequency.

The important observation is that \(M_{\min}\) is governed by the residual
phase \(|\omega-\kappa|L\), not by the full carrier phase \(|\omega|L\).
Therefore, the frequency detection step does not need to recover the
carrier frequency to the final approximation accuracy. It is sufficient to
identify an effective modulation center so that the remaining residual
phase is moderate. The residual oscillation can then be absorbed by a
moderate-order Laguerre envelope approximation.

\begin{figure}[htbp]
\centering
\includegraphics[width=0.65\textwidth]{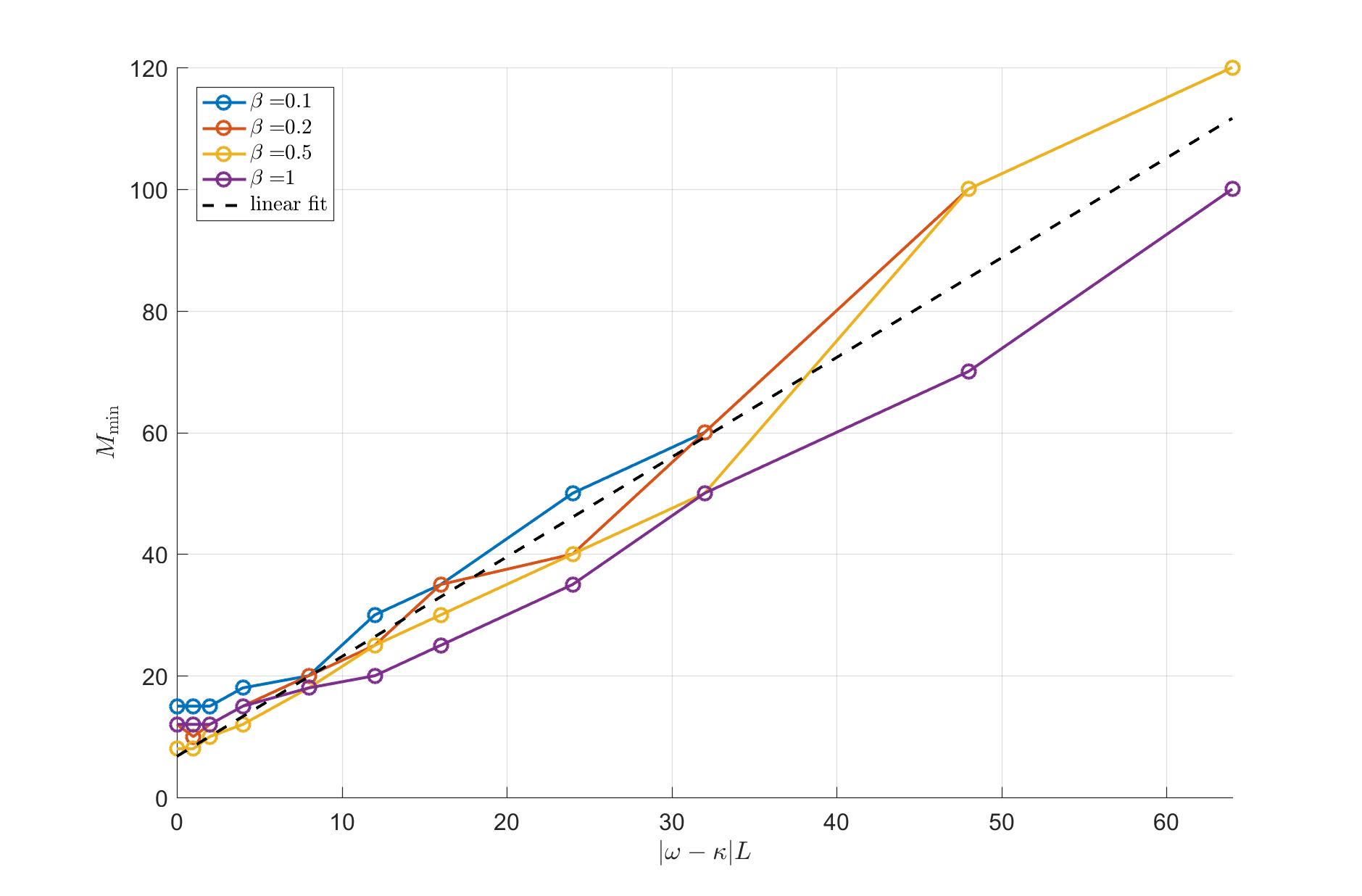}
\caption{
Minimal Laguerre order \(M_{\min}\) required to reach the prescribed
accuracy as a function of the accumulated residual phase
\(\Lambda=|\omega-\kappa|L\). The scaling parameter is fixed by
\(\alpha=4M/(5L)\). The dashed line indicates an approximately linear
empirical trend.
}
\label{fig:residual_phase_complexity}
\end{figure}

\subsection{Choice of the sampling ratio}

After fixing the scaling rule
\[
    \alpha=\frac{4M}{5L},
\]
and selecting the Laguerre order according to the residual phase
\[
    \Lambda=|\omega-\kappa|L,
\]
the remaining numerical parameter is the number of sampling points used in
the tail least-squares problem.

For a Laguerre expansion of order \(M\), the number of unknown coefficients
for a single unmodulated envelope is \(M+1\). We define the sampling ratio by
\[
    \gamma=\frac{m}{M+1},
\]
where \(m\) denotes the number of sampling points on the tail interval. In
the modulated multi-frequency case, the same idea is applied to the full
least-squares system by choosing the number of samples proportional to the
number of unknowns. The role of \(\gamma\) is analogous to the oversampling
parameter in Fourier extension methods: a small value may lead to an
unstable or under-resolved discrete least-squares problem, whereas excessive
oversampling increases the computational cost without substantially
improving the final accuracy.

To determine a suitable value of \(\gamma\), we fix the scaling rule
\(\alpha=4M/(5L)\) and test several representative residual phase values
\[
    \Lambda=|\omega-\kappa|L .
\]
For each value of \(\Lambda\), the Laguerre order \(M\) is chosen according
to the residual-phase test in the previous subsection, and only the sampling
ratio is varied.

Figure~\ref{fig:sampling_ratio} shows the influence of \(\gamma\) on the
modulated Laguerre approximation error. The error decreases rapidly when
\(\gamma\) increases from values close to one, especially for larger
residual phases. After \(\gamma\approx 3\)--\(4\), the improvement becomes
much less significant and the curves enter a nearly saturated regime. This
indicates that a moderate amount of oversampling is sufficient for the
TSVD-regularized tail least-squares approximation.

Therefore, in the following experiments we use
\[
    \boxed{\gamma=4}
\]
as the default sampling ratio. This choice provides a stable discrete
least-squares system while avoiding unnecessary oversampling in the tail
approximation.

\begin{figure}[htbp]
\centering
\includegraphics[width=0.65\textwidth]{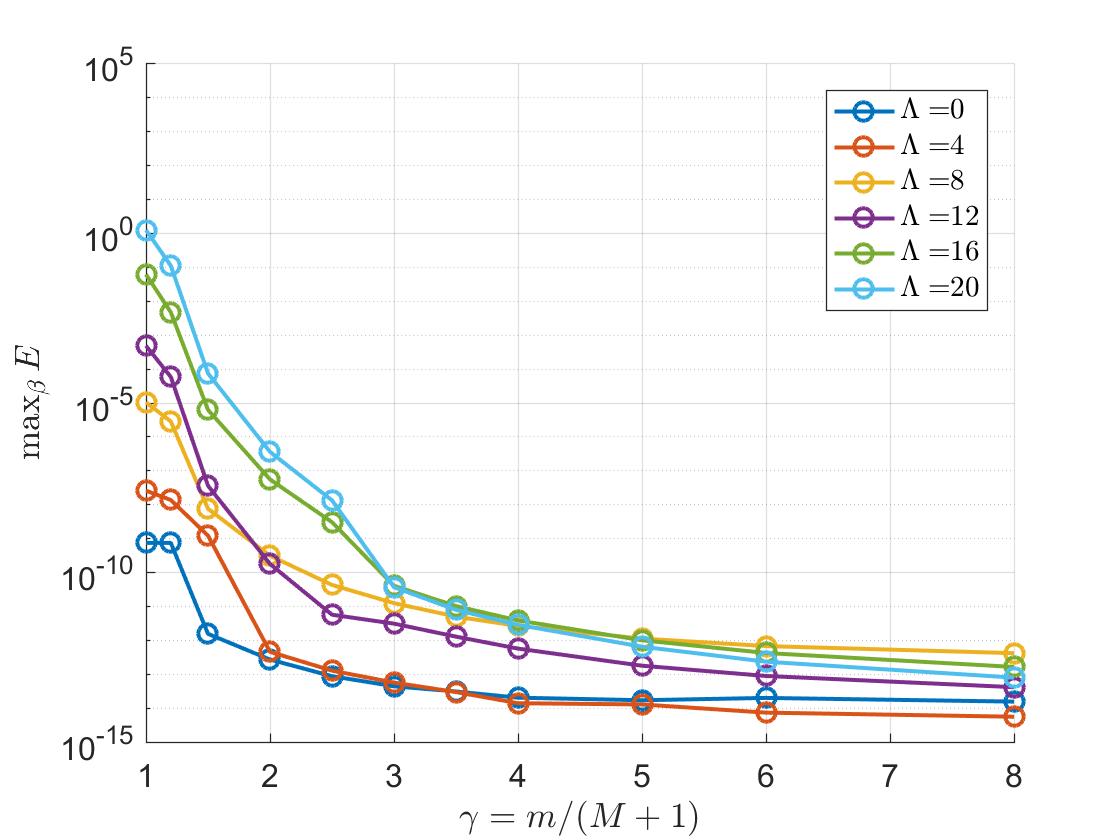}
\caption{
Influence of the sampling ratio \(\gamma=m/(M+1)\) on the modulated
Laguerre approximation error. The scaling parameter is fixed by
\(\alpha=4M/(5L)\), and the Laguerre order is selected according to the
residual phase \(\Lambda=|\omega-\kappa|L\). The results show that
\(\gamma=4\) gives a stable oversampling level for the subsequent
computations.
}
\label{fig:sampling_ratio}
\end{figure}
\subsection{Computational complexity}
\label{subsec:complexity}

We briefly discuss the computational cost of the proposed core--tail
approximation. The total cost consists of four parts: the local Fourier
extension approximation in the core, the possible internal edge-detection
preprocessing in the core, the modulated Laguerre approximation on the tails,
and the tail-frequency detection step.

On the core interval, suppose that the interval is divided into \(K\)
subintervals. On each subinterval, we use \(m_c\) sampling points and
\(n_c=2N+1\) local Fourier extension basis functions. The least-squares
problem on one subinterval has size \(m_c\times n_c\). If it is solved by a
truncated singular value decomposition, the cost on one subinterval is
\[
    O(m_c n_c^2+n_c^3).
\]
Since the same local parameters are used on all subintervals in the present
implementation, the total cost in the core is
\begin{equation}
    O\bigl(K(m_c n_c^2+n_c^3)\bigr).
    \label{eq:complexity_core}
\end{equation}
In practice, \(n_c\) is fixed and small, while \(K\) increases with the local
oscillation or structural complexity of the function. Therefore the core
fitting cost grows essentially linearly with the number of local
subintervals.

The internal edge-detection step, when used, is also based on local LFE
systems. If \(K_e\) local windows are tested and each uses the same order as
the core approximation, the cost is of the form
\[
    O\bigl(K_e(m_e n_e^2+n_e^3)\bigr),
\]
where \(m_e\) and \(n_e\) denote the number of samples and basis functions in
one detection window. After a candidate window is found, the subgrid
localization uses only a small number of one-sided LFE solves. Hence this
additional cost is a local preprocessing cost and has the same scaling type
as the core LFE approximation. It does not change the leading complexity
order when the local window size is fixed.

For the tail approximation, assume that \(q\) modulation frequencies are used
on one tail and that \(M+1\) Laguerre functions are assigned to each
frequency. Since both cosine- and sine-modulated blocks are included, the
number of tail unknowns is approximately
\[
    n_t = 2q(M+1).
\]
Let \(m_t=\gamma_t n_t\) be the number of tail sampling points. A direct
truncated SVD solve on one tail then costs
\[
    O(m_t n_t^2+n_t^3).
\]
Since there are two tails, the total tail fitting cost is
\begin{equation}
    O\bigl(2(m_t n_t^2+n_t^3)\bigr).
    \label{eq:complexity_tail}
\end{equation}
This term depends strongly on the Laguerre order \(M\), because
\(n_t=2q(M+1)\). Thus reducing \(M\) has a substantial effect on the tail
cost. This is one motivation for using the scaling rule
\[
    \alpha=\frac{4M}{5L},
\]
which allows moderate tail orders, typically \(M=25\)--\(30\), to be used in
the numerical experiments. With small \(q\) and moderate \(M\), the tail
systems remain much smaller than a global high-dimensional approximation on
the whole real line.

The tail-frequency detection step is performed only near the two
core--tail interfaces. Let \(m_{\rm det}\) be the number of detection samples
and let \(n_{\rm det}=2N_{\rm det}+1\) be the number of Fourier detection
modes. A direct SVD-based detection has cost
\[
    O(m_{\rm det}n_{\rm det}^2+n_{\rm det}^3)
\]
on one detection window. This step is used only to identify a small number of
dominant modulation centers. It is therefore sufficient to use a coarse
detection grid. In the numerical experiments, we use
\[
    m_{\rm det}=201,\qquad T_{\rm det}=4,\qquad
    \kappa_{\max}=80,\qquad \varepsilon_{\rm det}=10^{-8},
\]
which already gives reliable modulation frequencies for the subsequent tail
approximation. Hence the frequency detection cost is a preprocessing cost and
does not grow with the number of core subintervals.

Combining the above estimates, the overall cost can be summarized as
\begin{equation}
\begin{aligned}
    {\rm Cost}
    =&\; O\bigl(K(m_c n_c^2+n_c^3)\bigr)
    + O\bigl(K_e(m_e n_e^2+n_e^3)\bigr) \\
    &\quad
    + O\bigl(2(m_t n_t^2+n_t^3)\bigr)
    + O\bigl(2(m_{\rm det}n_{\rm det}^2+n_{\rm det}^3)\bigr).
\end{aligned}
\label{eq:overall_complexity}
\end{equation}
When the local basis sizes \(n_c\), \(n_e\), \(n_t\), and \(n_{\rm det}\) are
fixed, and when the number of tail modulation centers \(q\) remains small,
the dominant growth with respect to the core refinement is essentially
linear in \(K\). This is the main computational advantage of the localized
construction: refinement is introduced only where needed, while the tail
behavior is captured by a small number of modulated Laguerre blocks rather
than by a large global basis.

\section{Numerical experiments}
\label{sec:numerical}

In this section, we test the proposed core--tail Fourier--Laguerre
approximation on representative functions and a model problem on the real
line. The finite core is approximated by local Fourier extension (LFE), and
the two tails are approximated by modulated Laguerre frames. Unless otherwise
specified, the Laguerre scaling is chosen according to
\[
    \alpha=\frac{4M}{5L},
\]
where \(L\) is the computational tail length and \(M\) is the Laguerre order.
The sampling ratio in the tail least-squares problem is fixed as
\[
    \gamma=4 .
\]
All least-squares systems are solved by TSVD, with truncation threshold
\(10^{-13}\).

\subsection{Tail frequency detection}
\label{subsec:tail-frequency-detection}

The modulated Laguerre approximation requires modulation frequencies in the
tails. These frequencies are used as modulation centers rather than exact
carrier frequencies. Thus the frequency detection step is only required to
identify the dominant oscillatory components at a coarse level. Any moderate
frequency mismatch becomes a residual oscillation in the Laguerre envelope.

On a detection window \(x\in[0,L]\), we introduce
\[
    t=\frac{2(x-L/2)}{L},\qquad t\in[-1,1],
\]
and construct a local Fourier extension system with modes
\[
    \exp\left(\frac{i k\pi t}{T_{\rm det}}\right),
    \qquad k=-N_{\rm det},\ldots,N_{\rm det}.
\]
The physical frequency associated with mode \(k\) is
\[
    \kappa_k=\frac{2\pi k}{T_{\rm det}L}.
\]
After solving the least-squares problem by TSVD, we use the coefficient
indicator
\[
    I(k)=|c_k|+|c_{-k}|,\qquad k>0,
\]
to identify dominant positive-frequency components. For multi-frequency
tails, well-separated local maxima of \(I(k)\) are retained as candidate
modulation centers.

In the tests below, we use a reduced detection system with
\[
    L=8,\qquad
    m_{\rm det}=201,\qquad
    T_{\rm det}=4,\qquad
    \kappa_{\max}=80,\qquad
    \varepsilon_{\rm det}=10^{-8}.
\]
The detection order is chosen as
\[
    N_{\rm det}
    =
    \left\lceil
    \frac{\kappa_{\max}T_{\rm det}L}{2\pi}
    \right\rceil ,
\]
so the detection matrix has size \(201\times(2N_{\rm det}+1)\). This system
is small compared with the final approximation problems, but it is sufficient
for selecting effective modulation centers.

We consider the following six tail functions:
\[
\begin{aligned}
g_1(x)&=e^{-0.2x}\cos(50x),\\
g_2(x)&=e^{-0.05x^2}\cos(50x),\\
g_3(x)&=(1+x)^{-2}\cos(50x),\\
g_4(x)&=e^{-0.2x}\cos(50x+0.5\sin x),\\
g_5(x)&=e^{-0.2x}\left[\cos(40x)+0.5\cos(60x)\right],\\
g_6(x)&=e^{-0.2x}\left[\cos(40x)+0.5\cos(55x)+0.3\cos(70x)\right].
\end{aligned}
\]
The first three functions have the same carrier frequency but different
decay envelopes. The fourth function contains a weakly varying phase, and
the last two functions are multi-frequency examples.

Figures~\ref{fig:tail_frequency_single} and
\ref{fig:tail_frequency_multi} show the normalized indicators. For
\(g_1\)--\(g_3\), the dominant peaks are located near the reference carrier
frequency \(\omega=50\). For \(g_4\), the weak phase modulation broadens the
peak slightly, but the indicator still gives an effective modulation center.
For \(g_5\) and \(g_6\), the two and three dominant frequency components are
clearly separated.

\begin{figure}[htbp]
\centering
\includegraphics[width=0.70\textwidth]{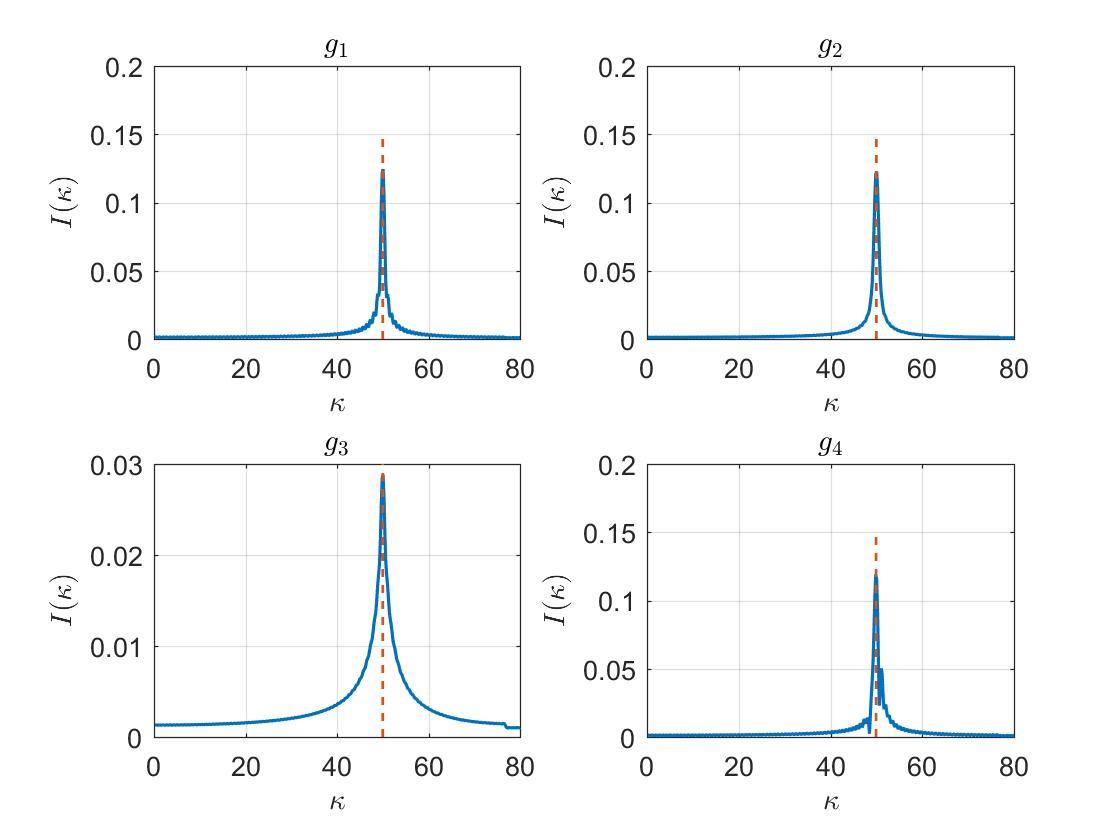}
\caption{Normalized LFE coefficient indicators for single-frequency and
weakly modulated tail functions. The dashed vertical lines indicate the
reference carrier frequencies.}
\label{fig:tail_frequency_single}
\end{figure}

\begin{figure}[htbp]
\centering
\includegraphics[width=0.70\textwidth]{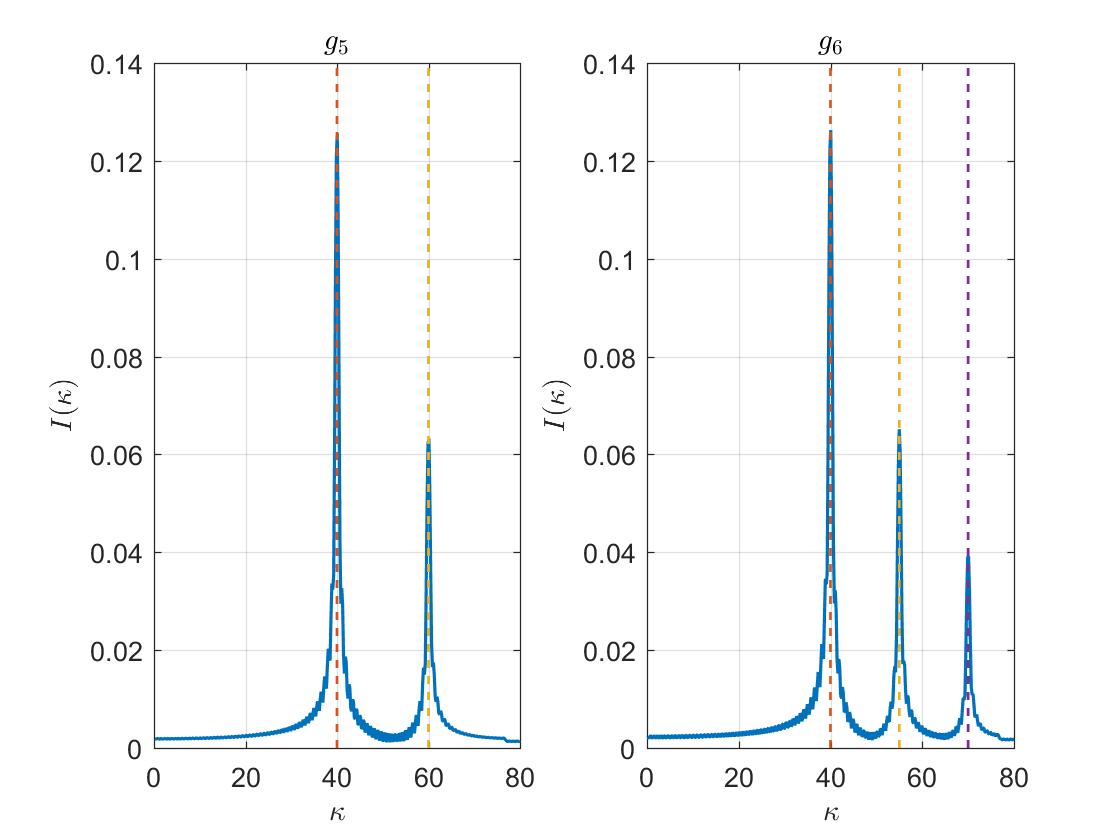}
\caption{Normalized LFE coefficient indicators for multi-frequency tail
functions. The detected peaks are used as modulation centers in the
modulated Laguerre tail approximation.}
\label{fig:tail_frequency_multi}
\end{figure}

The detected frequencies are summarized in
Table~\ref{tab:tail_frequency_detection}. We report the detection error by
the accumulated residual phase
\[
    |\omega_j-\kappa_j|L,
\]
because this is the quantity that determines the residual oscillation left
in the Laguerre envelope after modulation.

\begin{table}[htbp]
\centering
\caption{Tail frequency detection results with the reduced detection system.}
\label{tab:tail_frequency_detection}
\small
\setlength{\tabcolsep}{5pt}
\begin{tabular*}{\textwidth}{@{\extracolsep\fill}lccccccccc}
\toprule
 & \(g_1\) & \(g_2\) & \(g_3\) & \(g_4\)
 & \(g_{5,1}\) & \(g_{5,2}\)
 & \(g_{6,1}\) & \(g_{6,2}\) & \(g_{6,3}\) \\
\midrule
Reference frequency
& \(50.0000\) & \(50.0000\) & \(50.0000\) & \(50.0796\)
& \(40.0000\) & \(60.0000\) & \(40.0000\) & \(55.0000\) & \(70.0000\) \\
Detected frequency
& \(49.9991\) & \(49.9991\) & \(49.9966\) & \(49.9318\)
& \(39.9987\) & \(60.0035\) & \(39.9937\) & \(55.0064\) & \(70.0047\) \\
\(|\omega_j-\kappa_j|L\)
& \(0.0069\) & \(0.0070\) & \(0.0275\) & \(1.1822\)
& \(0.0104\) & \(0.0281\) & \(0.0504\) & \(0.0512\) & \(0.0376\) \\
\bottomrule
\end{tabular*}
\end{table}

For the strictly single-frequency cases \(g_1\)--\(g_3\), the accumulated
residual phase is below \(3\times10^{-2}\). For the multi-frequency cases
\(g_5\) and \(g_6\), all dominant components are detected with small residual
phases. The larger value for \(g_4\) is expected, since this function has a
weakly varying instantaneous frequency. In that case, the detected frequency
should be interpreted as an effective modulation center rather than an exact
carrier frequency. These results support the use of a coarse LFE-based
frequency detector in the construction of the modulated Laguerre tails.

\subsection{Comparison with Hermite approximation}
\label{subsec:hermite-comparison}

We next compare the proposed adaptive core--tail approximation with a
fixed-scaling Hermite approximation on $[-10,10]$. The purpose of this
comparison is not to show that one basis is universally superior, but to
demonstrate their different advantages. Hermite functions provide an efficient
global representation for smooth functions with Gaussian-type decay, whereas
the proposed localized method is designed for functions containing oscillatory
tails, multiple frequencies, localized structures, and internal low-regularity
points.

The Hermite approximation uses the scaled basis
\[
    \psi_n^\lambda(x)=\sqrt{\lambda}\psi_n(\lambda x),
    \qquad n=0,1,\ldots,N_H,
\]
where the scaling parameter is fixed as
\[
    \lambda=\sqrt{2}.
\]
This choice avoids case-dependent optimization and provides a consistent
baseline. For the proposed method, the local Fourier extension parameters are
fixed in the core, while the tail Laguerre scaling follows
\[
    \alpha=\frac{4M}{5L}.
\]
The tail modulation frequencies are determined by the LFE-based frequency
indicator introduced previously.

We first consider six representative functions:
\[
\begin{aligned}
f_1(x)&=e^{-x^2}(1+0.2\cos 2x),\\
f_2(x)&=e^{-0.18\sqrt{1+x^2}}\cos(30x),\\
f_3(x)&=e^{-0.18\sqrt{1+x^2}}
       \left[\cos(30x)+0.45\cos(48x)\right],\\
f_4(x)&=e^{-0.14\sqrt{1+x^2}}
       \left[\cos(28x)+0.45\cos(34x)+0.25\cos(44x)\right],\\
f_5(x)&=e^{-x^2}+0.3e^{-15x^2}\cos(80x),\\
f_6(x)&=e^{-x^2}\operatorname{erf}(80x).
\end{aligned}
\]

The first function is globally smooth and Gaussian-type. The next three
functions contain oscillatory tails with increasing frequency complexity,
while the last two contain localized structures in the core region.

Figure~\ref{fig:hermite_comparison} reports the maximum approximation error
with respect to the number of approximation degrees of freedom. Here the
horizontal axis is defined consistently as
\[
    N_{\rm app}
    =
    N_{\rm core}^{\rm DOF}
    +
    N_{\rm tail}^{\rm DOF},
\]
where only basis coefficients are counted; least-squares sampling points are
not included.

For the smooth Gaussian-type function $f_1$, the Hermite approximation shows
excellent efficiency, which is consistent with the global nature of the
Hermite basis. For oscillatory tail functions $f_2$--$f_4$, the proposed method
benefits from frequency modulation and local resolution, avoiding the need for
a large global expansion. The advantage becomes more evident for $f_5$ and
$f_6$, where localized oscillations or sharp core structures significantly
reduce the efficiency of a global Hermite representation.

\begin{figure}[htbp]
\centering
\includegraphics[width=0.95\textwidth]{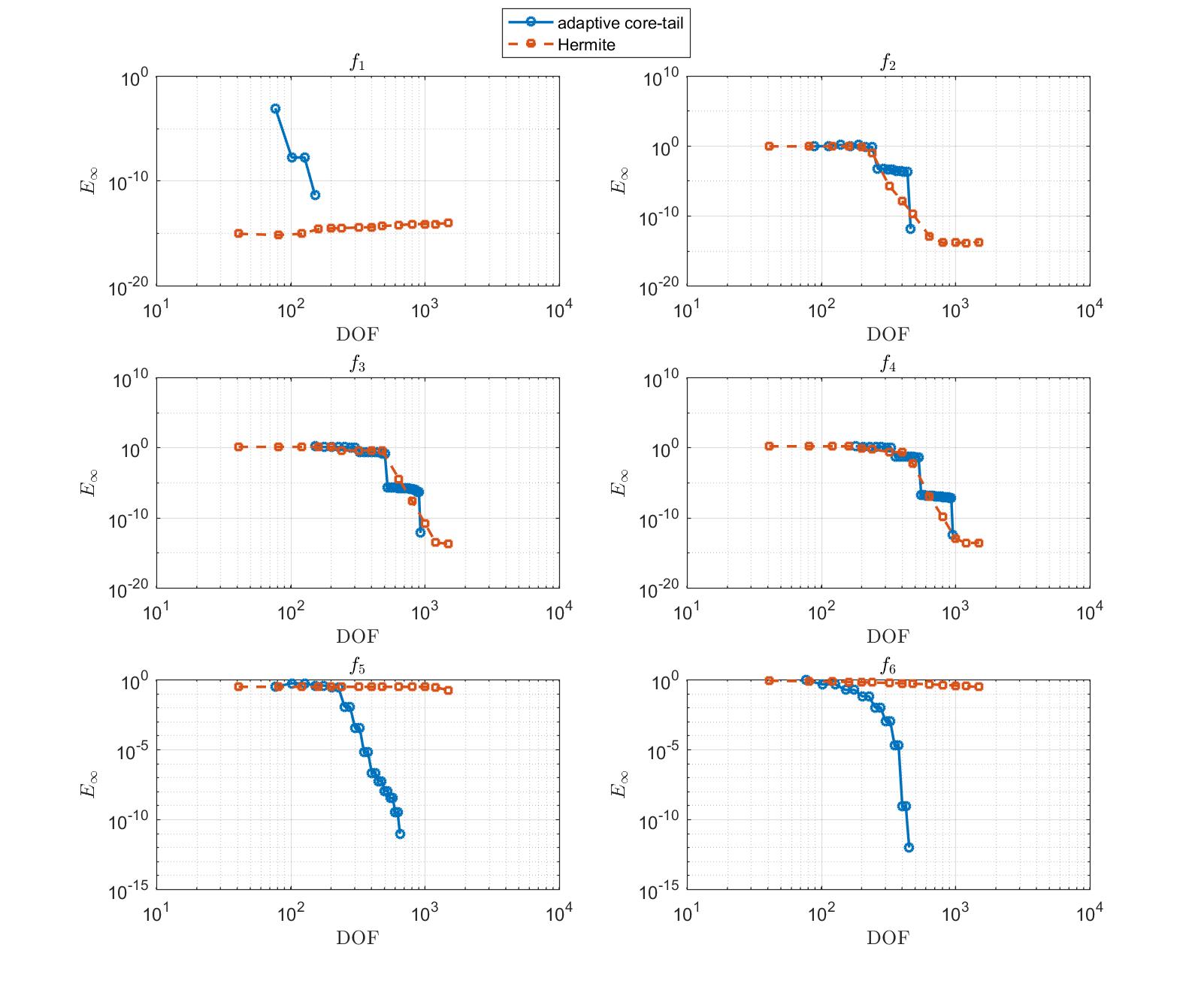}
\caption{
Maximum error $E_\infty$ versus approximation degrees of freedom for the
adaptive core--tail approximation and the fixed-scaling Hermite approximation.
}
\label{fig:hermite_comparison}
\end{figure}

We further consider a continuous function with an internal derivative jump:
\[
\begin{aligned}
f_{\rm dj}(x)
&=
e^{-0.18\sqrt{1+x^2}}
\left(\cos(30x)+0.45\cos(48x)\right)
+0.25e^{-3x^2}|x-\xi|,
\\
&\qquad \xi=\frac{\sqrt2}{4}.
\end{aligned}
\]

The first term describes two oscillatory decaying tail components, while the
second term introduces a localized derivative discontinuity in the core.
The computational domain is $[-10,10]$, and the core interval is $[-3,3]$.
The LFE parameters are $N=12$ and $T=6$, while the Laguerre tail uses
$M=30$ and the detected modulation frequencies $30$ and $48$.

The internal edge is detected through the coefficient-energy indicator
\[
    \eta_k=\|c_k^\epsilon\|_2 .
\]
The detected location is
\[
\xi=0.353553390593274,
\qquad
\widehat{\xi}=0.353553390591261,
\]
with localization error
\[
|\widehat{\xi}-\xi|
=2.013\times10^{-12}.
\]
Thus the detected edge is sufficiently accurate for subgrid partition
alignment.

Figure~\ref{fig:derivative_jump_detection} illustrates the edge detection
process. The coefficient-energy indicator produces a clear peak around the
low-regularity point, and the one-sided LFE models provide a subgrid estimate
of the edge location.

\begin{figure}[htbp]
\centering
\includegraphics[width=0.7\textwidth]{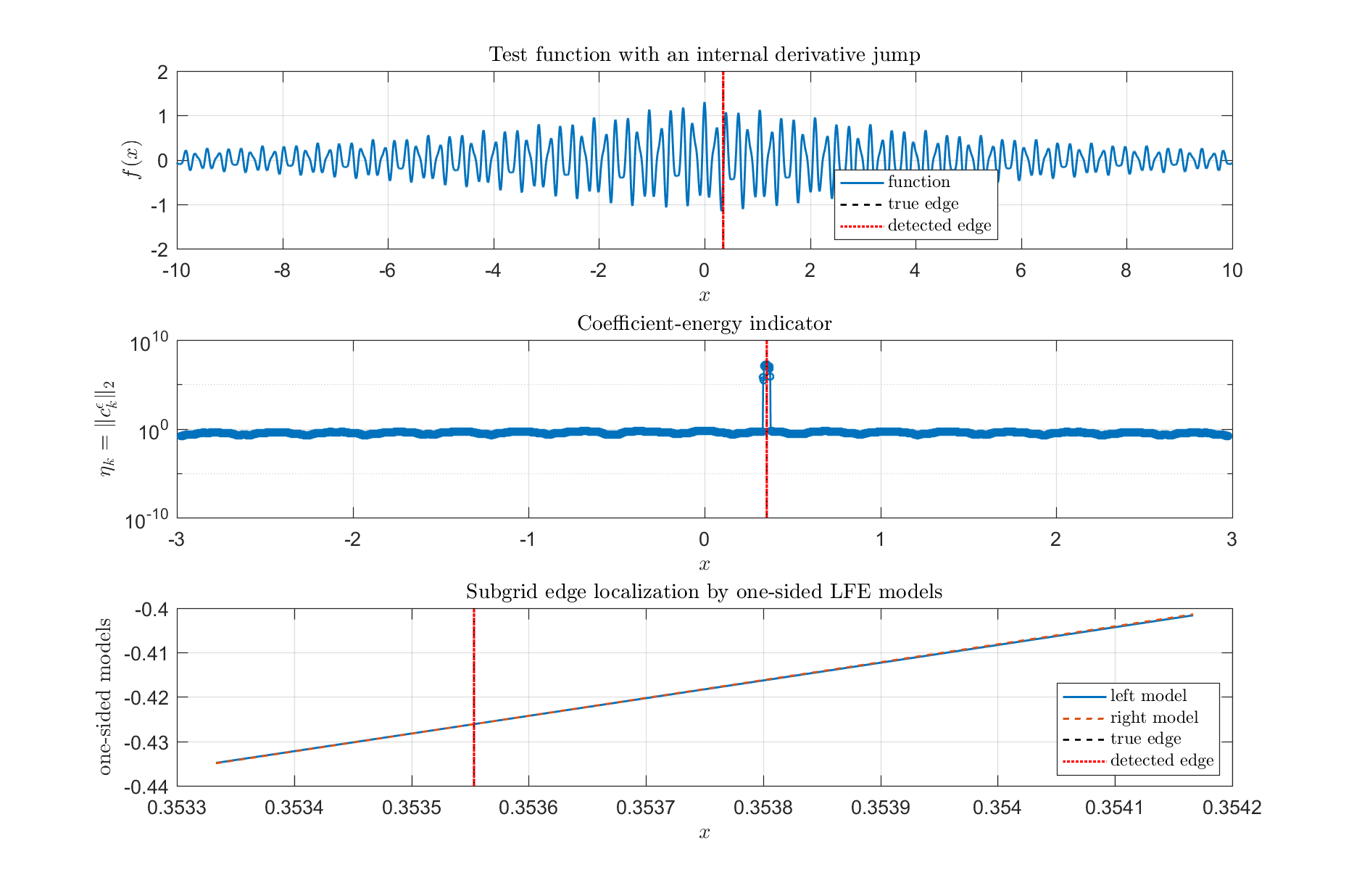}
\caption{
Detection and subgrid localization of an internal derivative jump using the
LFE coefficient-energy indicator and one-sided local models.
}
\label{fig:derivative_jump_detection}
\end{figure}

Table~\ref{tab:derivative_jump_coretail} compares the aligned and unaligned
core partitions. The results show that without edge alignment, the local
Fourier approximation is affected by the derivative discontinuity and the
error stagnates around $10^{-4}$. After inserting the detected edge into the
partition, spectral accuracy is recovered and the error decreases to the
roundoff level.

\begin{table}[htbp]
\centering
\caption{
Errors for the derivative-jump example. The node number denotes approximation
degrees of freedom.
}
\label{tab:derivative_jump_coretail}
\begin{tabular}{ccccc}
\toprule
$K$ & DOF
& $E_\infty$ (aligned)
& $E_2$ (aligned)
& $E_\infty$ (without alignment)
\\
\midrule
16 & 728
& $1.806\times10^{-6}$
& $1.486\times10^{-7}$
& $5.384\times10^{-4}$
\\
24 & 928
& $3.219\times10^{-12}$
& $2.314\times10^{-13}$
& $6.493\times10^{-4}$
\\
32 & 1128
& $4.594\times10^{-13}$
& $7.501\times10^{-15}$
& $3.093\times10^{-4}$
\\
\bottomrule
\end{tabular}
\end{table}

Figure~\ref{fig:derivative_jump_pointwise} shows the pointwise error
distribution. The edge-aligned core--tail approximation remains close to
machine precision except in a very small neighborhood of the detected edge.
In contrast, the Hermite approximation is globally influenced by the localized
nonsmooth structure. Even with $N_H=1500$, the Hermite approximation gives
\[
E_\infty=1.347\times10^{-3},
\qquad
E_2=3.549\times10^{-5}.
\]

\begin{figure}[htbp]
\centering
\includegraphics[width=0.65\textwidth]{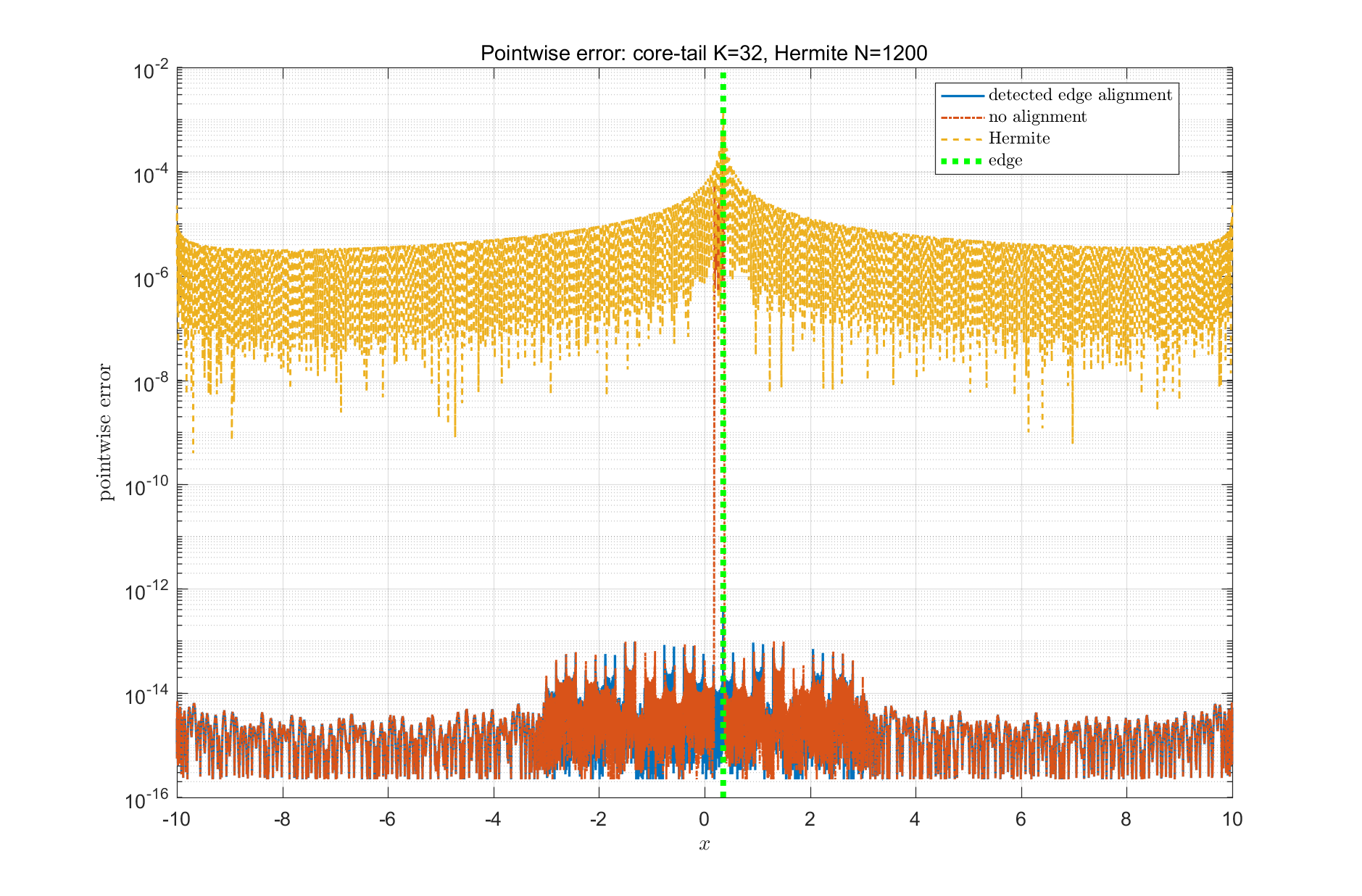}
\caption{
Pointwise errors for the derivative-jump example. The core--tail result uses
$K=32$, while the Hermite approximation uses $N_H=1200$.
}
\label{fig:derivative_jump_pointwise}
\end{figure}

Finally, we compare computational efficiency. The runtime comparison is based
on the same approximation-degree-of-freedom definition used above. The
frequency detection step is included as a preprocessing cost.

Figure~\ref{fig:runtime_comparison} shows that the localized construction has
a slower growth rate with respect to the approximation size. This is because
the proposed method solves several small local systems rather than repeatedly
forming one large global approximation system.

\begin{figure}[htbp]
\centering
\includegraphics[width=0.68\textwidth]{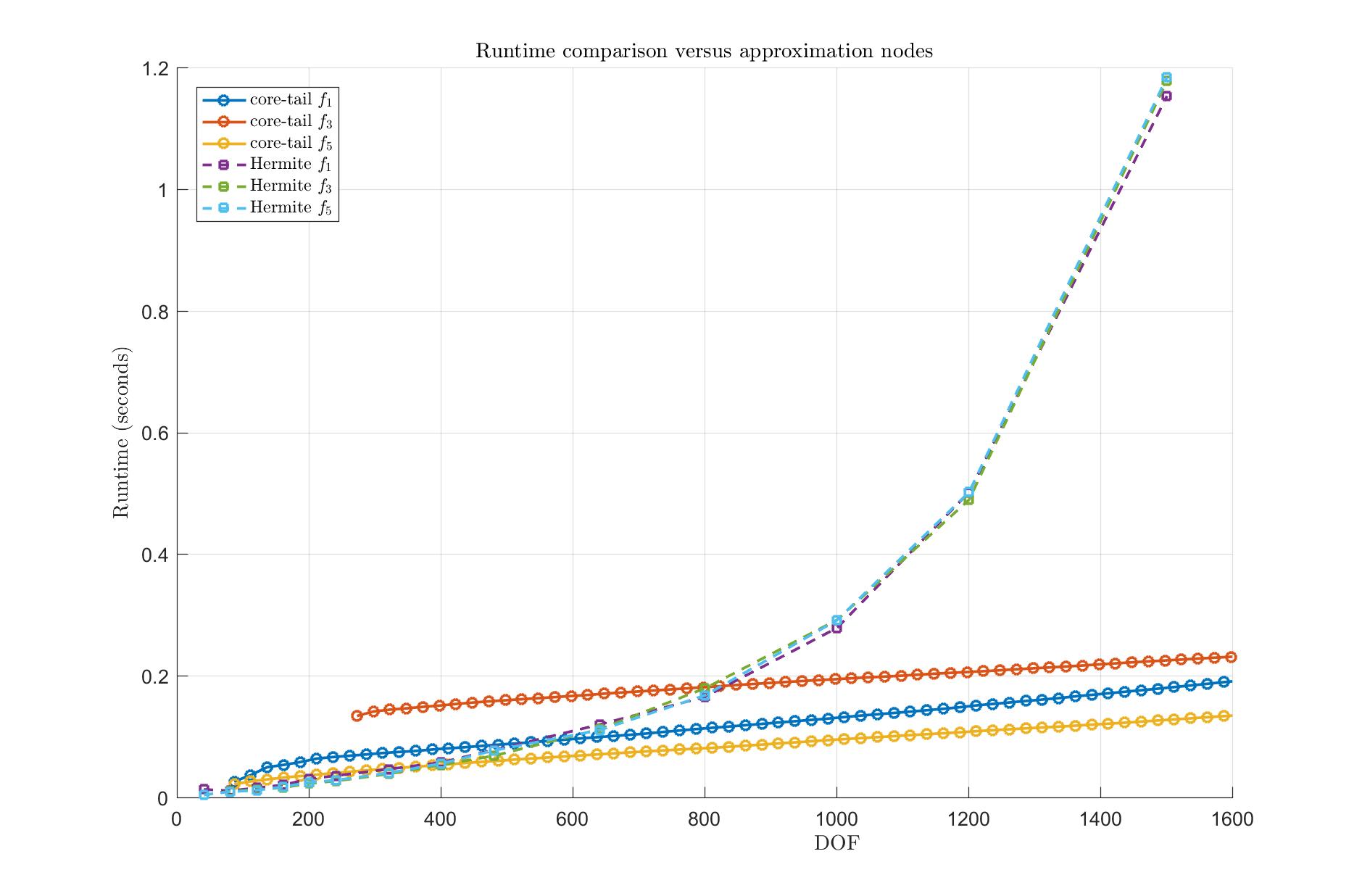}
\caption{
Runtime comparison between the adaptive core--tail approximation and the
fixed-scaling Hermite approximation using approximation degrees of freedom as
the complexity measure.
}
\label{fig:runtime_comparison}
\end{figure}

Overall, the comparison demonstrates that Hermite approximation remains an
excellent choice for globally smooth Gaussian-type functions, while the
proposed core--tail Fourier--Laguerre framework provides clear advantages for
functions containing oscillatory tails, multiple frequencies, localized
structures, and internal derivative discontinuities.

\subsection{A decaying model problem on the real line}
\label{subsec:model-problem}

We finally consider a decaying model problem on the real line,
\begin{equation}
    -u''(x)+\gamma u(x)=f(x),\qquad x\in\mathbb{R},
    \qquad
    \lim_{|x|\rightarrow\infty}u(x)=0,
    \label{eq:model_problem}
\end{equation}
where $\gamma>0$. This example demonstrates that the proposed core--tail
representation can be incorporated into differential operators while
preserving the decay property and the interface continuity.

We set $\gamma=4$ and prescribe the exact solution
\begin{equation}
\begin{aligned}
u_{\rm ex}(x)
=&\;e^{-0.45\sqrt{1+x^2}}\cos(20x)
+0.35e^{-0.45\sqrt{1+x^2}}\cos(32x)
\\
&\quad
+0.25e^{-3x^2}\cos(12x).
\end{aligned}
\label{eq:model_exact_solution}
\end{equation}
The right-hand side is generated analytically by
\begin{equation}
    f(x)=-u_{\rm ex}''(x)+\gamma u_{\rm ex}(x).
\end{equation}
Therefore, the reference solution does not involve any numerical
quadrature or additional approximation error.

The computational domain is truncated to $[-12,12]$ with the core interval
$[-3,3]$. The core approximation uses LFE with
\[
    N=12,\qquad T=6,
\]
while the two tails are approximated by modulated Laguerre frames with
\[
    M=40,\qquad
    \kappa_L=\kappa_R=\{20,32\}.
\]

The numerical solution is represented by the piecewise expansion
\[
u_N(x)=
\begin{cases}
u_L(x),&x<-3,\\
u_j(x),&x\in I_j,\quad j=1,\ldots,K,\\
u_R(x),&x>3.
\end{cases}
\]

Since all local basis functions are explicitly differentiable, the operator
$-\partial_{xx}+\gamma$ is applied directly to the core Fourier extension
basis and the modulated Laguerre basis. At the interfaces, the continuity
conditions
\begin{equation}
    [u_N]=0,\qquad [u_N']=0
    \label{eq:model_interface_conditions}
\end{equation}
are imposed. These constraints are incorporated through a nullspace
reduction. Specifically, if $Ec=0$ denotes the interface constraints, we write
$c=Zy$, where the columns of $Z$ span the nullspace of $E$, and solve the
resulting reduced least-squares system for $y$.

Table~\ref{tab:model_problem} reports the numerical results. Here the reported
nodes correspond to the approximation degrees of freedom (DOF), namely the
number of expansion coefficients. This definition is consistent with the
complexity comparison against global spectral approximations.

\begin{table}[htbp]
\centering
\caption{Errors for the decaying model problem with constrained core--tail
collocation.}
\label{tab:model_problem}
\begin{tabular}{cccccc}
\toprule
$K$
& DOF
& Reduced DOF
& $E_\infty(u)$
& $E_2(u)$
& $\max\{|[u]|,|[u']|\}$
\\
\midrule
 2  & 378  & 372  & $9.417\times10^{-1}$
& $1.666\times10^{-1}$ & $2.27\times10^{-3}$\\
 4  & 428  & 418  & $3.888\times10^{-1}$
& $7.353\times10^{-2}$ & $5.68\times10^{-4}$\\
 6  & 478  & 464  & $2.121\times10^{-1}$
& $4.595\times10^{-2}$ & $4.90\times10^{-4}$\\
 8  & 528  & 510  & $2.134\times10^{-3}$
& $4.474\times10^{-4}$ & $5.60\times10^{-6}$\\
10  & 578  & 556  & $5.419\times10^{-5}$
& $1.331\times10^{-5}$ & $1.65\times10^{-7}$\\
12  & 628  & 602  & $4.615\times10^{-8}$
& $9.154\times10^{-9}$ & $5.66\times10^{-9}$\\
16  & 728  & 694  & $1.456\times10^{-11}$
& $4.764\times10^{-12}$ & $2.60\times10^{-13}$\\
20  & 828  & 786  & $6.368\times10^{-13}$
& $1.722\times10^{-13}$ & $2.98\times10^{-14}$\\
24  & 928  & 878  & $1.299\times10^{-13}$
& $2.572\times10^{-14}$ & $4.87\times10^{-14}$\\
32  &1128  &1062  & $4.692\times10^{-13}$
& $1.182\times10^{-13}$ & $4.10\times10^{-14}$\\
\bottomrule
\end{tabular}
\end{table}

The convergence history is shown in Figure~\ref{fig:model_error_nodes}.
The error decreases rapidly as the number of local core subintervals increases.
The maximum error reaches the level of $10^{-13}$ around $K=20$, after which
the improvement becomes limited by TSVD truncation and floating-point
round-off effects.

\begin{figure}[htbp]
\centering
\includegraphics[width=0.68\textwidth]{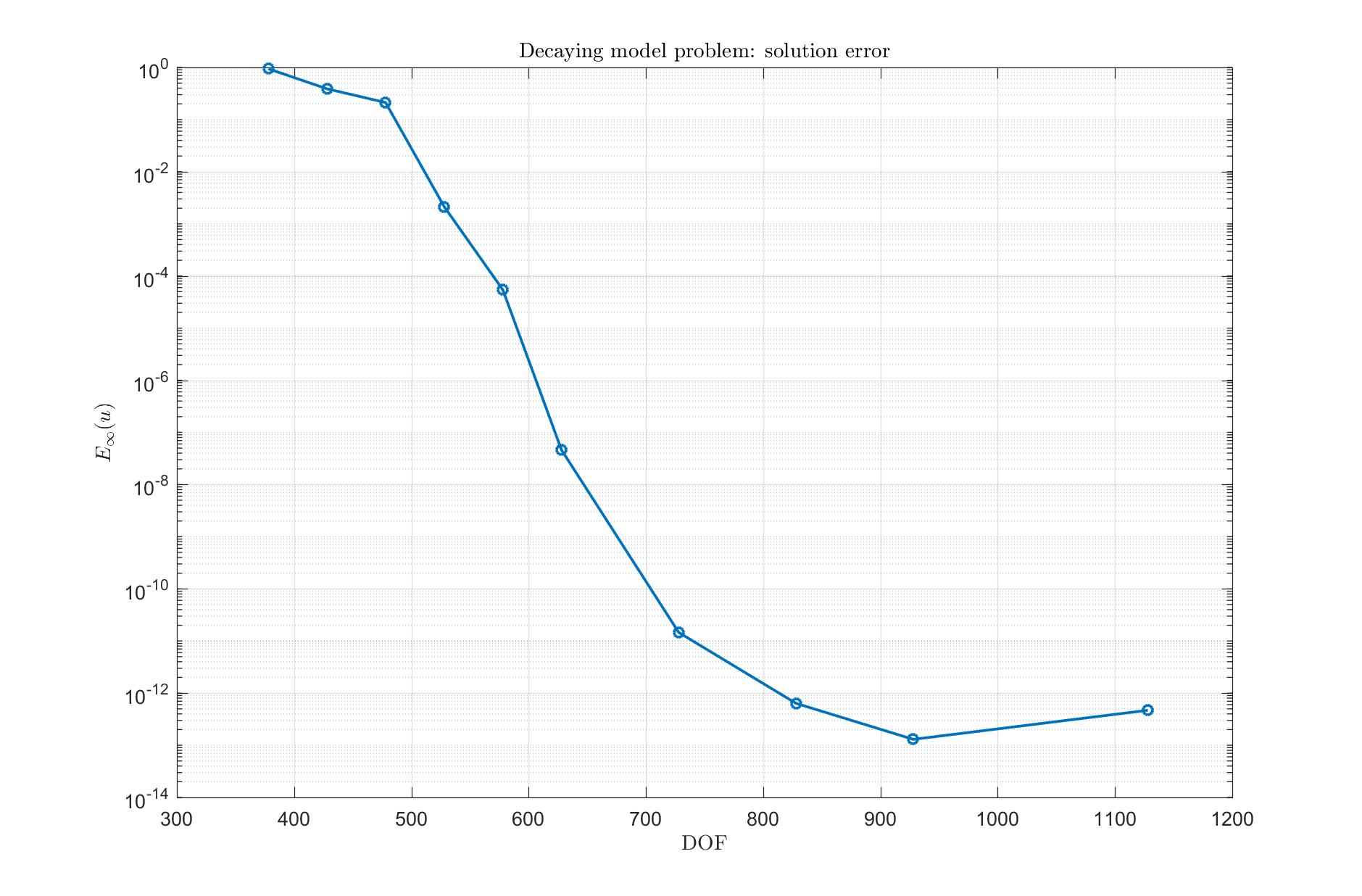}
\caption{Maximum solution error versus approximation degrees of freedom for
the decaying model problem.}
\label{fig:model_error_nodes}
\end{figure}

Figure~\ref{fig:model_pointwise} shows the pointwise error distribution for the
representative case $K=20$ with 828 approximation degrees of freedom. The
error remains below $10^{-12}$ over the whole computational domain, indicating
that the constrained core--tail formulation preserves both the decay behavior
and the interface continuity.

\begin{figure}[htbp]
\centering
\includegraphics[width=0.68\textwidth]{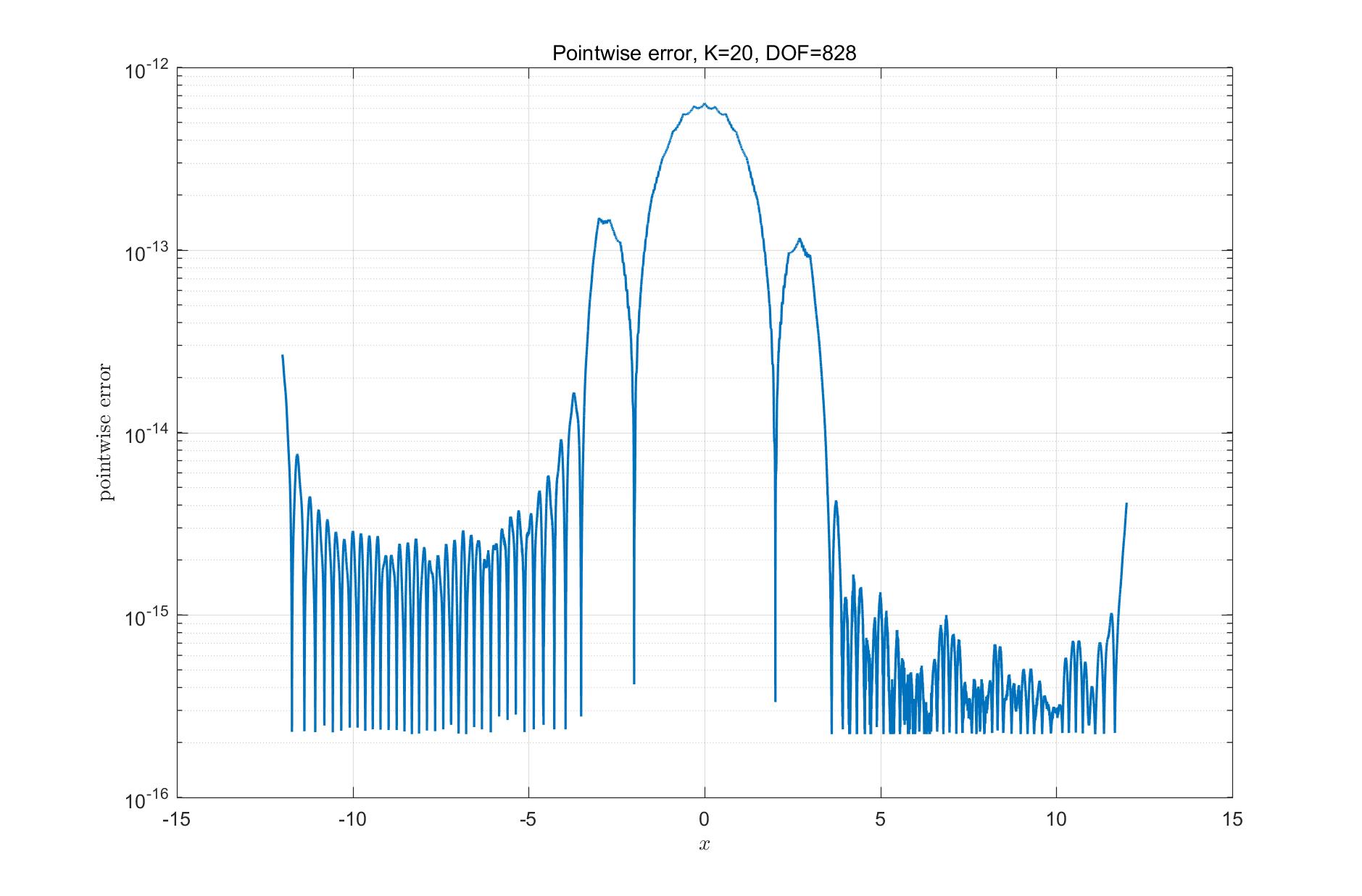}
\caption{Pointwise solution error for the decaying model problem with
$K=20$.}
\label{fig:model_pointwise}
\end{figure}

\section{Conclusion and remarks}
\label{sec:conclusion}

We have proposed a localized core--tail Fourier--Laguerre frame method for
approximation and computation on unbounded domains. The method avoids using a
single global basis for the whole real line. Instead, the finite core and the
far-field tails are represented by basis systems adapted to their different
features: local Fourier extension in the core and modulated Laguerre frames in
the tails.

The main advantage of the construction is the separation of numerical tasks.
In the core, localization reduces the effective frequency scale and allows
oscillatory or locally complicated structures to be resolved by small local FE
systems. The coefficient-energy information of LFE also provides a practical
edge detector for continuous piecewise smooth functions, so that internal
derivative jumps can be aligned with the local partition. In the tails,
modulation removes the dominant oscillatory carrier and leaves only a slowly
varying decaying envelope for the Laguerre expansion.

The error estimates clarify the role of each component. The core error is
governed by local smoothness and local frequency-length products. The tail
error is governed by the Laguerre approximability of the residual envelopes.
Frequency detection does not need to recover exact carrier frequencies; it is
sufficient that the residual phase after modulation remains moderate. The
finite-tail truncation and TSVD regularization errors enter as separate and
controllable terms.

The numerical experiments support these conclusions. The reduced LFE frequency
detector reliably identifies effective modulation centers. Compared with a
fixed-scaling Hermite approximation, the proposed method is particularly
advantageous for oscillatory tails, multi-frequency tails, localized core
structures, and derivative-discontinuous core functions. The decaying model
problem further shows that the representation can be incorporated into a
differential operator discretization by applying the operator analytically to
the basis functions and enforcing interface continuity constraints.

Several extensions remain for future work. A fully automatic adaptive strategy
could combine residual refinement, frequency detection, and internal edge
alignment in a unified algorithm. Higher-dimensional versions, for example via
tensor-product or alternating-direction constructions, would be useful for PDEs
on multidimensional unbounded domains. Fast implementations of local Fourier
extension and reduced-order tail solvers may further improve the efficiency of
the method for large-scale scientific computing problems.

\end{document}